\newtheorem{thm}{Theorem}
\newtheorem{lem}[thm]{Lemma}
\newtheorem{prop}[thm]{Proposition}
\newtheorem{cor}[thm]{Corollary}
\newtheorem{rem}[thm]{Remark}
\newcommand{\Z}{\mathbb{Z}}
\newcommand{\R}{\mathbb{R}}
\newcommand{\1}{\mathds{1}}
\renewcommand{\P}{\mathbb{P}}
\newcommand{\ep}{\varepsilon}
\newcommand{\Var}{\operatorname{Var}}
\begin{document}
\title{Transport Proofs of some discrete variants of the Pr\'ekopa-Leindler inequality}

\author[N. Gozlan, C. Roberto, P.-M. Samson, P. Tetali]{Nathael Gozlan, Cyril Roberto, Paul-Marie Samson, Prasad Tetali}

\date{\today}

\thanks{This research is partly funded by the B\'ezout Labex, funded by ANR, reference ANR-10-LABX-58 and the  Labex MME-DII funded by ANR, reference ANR-11-LBX-0023-01. Research of P.T. is supported in part by the NSF grant DMS-1811935}

\address{N. Gozlan : Universit\'e Paris Descartes, MAP5, UMR 8145, 45 rue des Saints Pères, 75270 Paris Cedex 06}
\address{P.-M. Samson : Universit\'e Paris-Est, Laboratoire d'Analyse et de Math\'ematiques Appliqu\'ees (UMR 8050), UPEM, UPEC, CNRS, F-77454, Marne-la-Vall\'ee, France}

\address{C. Roberto : Universit\'e Paris Nanterre - Modal'X, 200 avenue de la R\'epublique 92000 Nanterre, France}
\address{P. Tetali : School of Mathematics \& School of Computer Science, Georgia Institute of Technology,
Atlanta, GA 30332}
\email{natael.gozlan@parisdescartes.fr, croberto@math.cnrs.fr, paul-marie.samson@u-pem.fr,\linebreak tetali@math.gatech.edu}

\keywords{Pr\'ekopa-Leindler Inequality, Optimal Transport}
\subjclass{60E15, 32F32 and 26D10}

\begin{abstract}
We give a transport proof of a discrete version of the displacement convexity of  entropy on integers ($\Z$), and get, as a consequence, two discrete forms of the Pr\'ekopa-Leindler Inequality : the Four Functions Theorem of Ahlswede and Daykin on the discrete hypercube \cite{AD} and a recent result on $\Z$ due to Klartag and Lehec  \cite{KL}.
\end{abstract}

\maketitle
\maketitle

\section*{Introduction}
The aim of the paper is to develop a transport approach to some discrete versions of the Pr\'ekopa-Leindler Inequality \cite{Pre1,Pre2,Lei}, namely the Four Functions Theorem due to  Ahlswede and Daykin \cite{AD} and a recent result of Klartag and Lehec \cite{KL} on $\Z$. Both inequalities will be a consequence of the stronger displacement convexity of entropy on the set of integers. Before presenting these discrete functional inequalities, let us recall the original continuous statement inspiring them. 

The classical Pr\'ekopa-Leindler Inequality is the following.
\begin{thm}[Pr\'ekopa-Leindler]\label{thm:PL}
Suppose that  $f,g,h : \R^n \to \R^+$ are measurable functions such that, for some $t \in (0,1)$, 
\begin{equation}\label{eq:PLhyp}
f(x)^{1-t}g(y)^t \leq h((1-t)x+ty),\qquad \forall x,y \in \R^n.
\end{equation}
Then
\[
\left(\int_{\R^n} f(x)\,dx\right)^{1-t} \left(\int_{\R^n} g(y)\,dy\right)^{t} \leq \int_{\R^n} h(z)\,dz .
\]
\end{thm}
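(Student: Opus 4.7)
The plan is to use an optimal transport argument, which fits the spirit of the paper. After the trivial reduction to the case where $A := \int f > 0$ and $B := \int g > 0$, I would normalize to probability densities $\tilde f := f/A$ and $\tilde g := g/B$. By Brenier's theorem, there exists a convex function $\varphi : \R^n \to \R$ whose gradient $T = \nabla \varphi$ pushes the probability measure $\tilde f\,dx$ onto $\tilde g\,dx$. In particular, the Monge--Amp\`ere equation
\[
\tilde f(x) = \tilde g(T(x))\,\det(DT(x))
\]
holds almost everywhere, where $DT$ denotes the Alexandrov Jacobian (a symmetric positive semidefinite matrix a.e.).

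Next, I would introduce the displacement interpolation $S_t(x) := (1-t)x + t\,T(x)$. Since $S_t = \nabla\bigl((1-t)|x|^2/2 + t\varphi\bigr)$ is itself the gradient of a convex function, $DS_t(x) = (1-t)\,\mathrm{Id} + t\,DT(x)$ is symmetric positive semidefinite a.e., and the map $S_t$ is injective on a set of full measure. Applying hypothesis \eqref{eq:PLhyp} at the pair $(x, T(x))$ gives the pointwise bound $h(S_t(x)) \geq f(x)^{1-t} g(T(x))^t$, and the change of variables formula for gradients of convex functions yields
\[
\int_{\R^n} h \;\geq\; \int_{\R^n} h(S_t(x))\,\det(DS_t(x))\,dx \;\geq\; \int_{\R^n} f(x)^{1-t}\, g(T(x))^t\, \det(DS_t(x))\,dx.
\]

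The key analytic input is the matrix inequality $\det(DS_t(x)) \geq \det(DT(x))^t$, which follows from the concavity of $\log\det$ on the cone of symmetric positive semidefinite matrices (applied to $\mathrm{Id}$ and $DT(x)$). Plugging this in and using the Monge--Amp\`ere identity $g(T(x))\det(DT(x)) = (B/A)\,f(x)$, I obtain
\[
\int h \;\geq\; \int f(x)^{1-t}\Bigl(\tfrac{B}{A}\,f(x)\Bigr)^{t}\,dx \;=\; \Bigl(\tfrac{B}{A}\Bigr)^{t} A \;=\; A^{1-t}B^{t},
\]
as required.

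The main obstacle is the analytic justification: Brenier's map is only Alexandrov-differentiable and the Monge--Amp\`ere equation holds only in a weak sense, so one must invoke McCann's change-of-variables formula for pushforwards under gradients of convex functions (or, alternatively, first truncate $f$ and $g$ to compactly supported bounded approximants, apply Caffarelli-type regularity, and pass to the limit). A secondary subtlety is the measurability of $h$: the hypothesis makes sense with $h$ measurable only if one replaces the right-hand side by the essential supremum $\int^\ast h$, or assumes outright that $h$ is measurable, as in the statement.
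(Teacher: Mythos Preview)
Your proof is correct and follows the same optimal-transport strategy as the paper: push $\tilde f\,dx$ onto $\tilde g\,dx$ by a monotone map, use the Monge--Amp\`ere relation as a Jacobian identity, and bound the Jacobian of the interpolated map via an arithmetic--geometric inequality. The only difference is one of scope: the paper writes out the argument only for $d=1$, using the monotone rearrangement $T=F_{\nu}^{-1}\circ F_\mu$ and the scalar inequality $(1-t)+tT'(x)\geq T'(x)^t$, and then defers $n\geq 2$ to induction (or to the Brenier/Knothe maps, which it mentions without details). You instead carry out directly the $n$-dimensional Brenier-map version that the paper only alludes to, with the matrix inequality $\det\bigl((1-t)\mathrm{Id}+t\,DT\bigr)\geq\det(DT)^t$ playing the role of the scalar AM--GM. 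Your remarks on the technical justification (McCann's change-of-variables for Alexandrov gradients, or approximation plus regularity) are appropriate and are exactly the analytic points one has to address in this route.
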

The Pr\'ekopa-Leindler Inequality is a functional version of the celebrated Brunn-Minkowski Inequality stating that for all Borel sets $A,B \subset \R^n$ and all $t \in (0,1)$ it holds
\[
\mathrm{Vol}((1-t)A+tB) \geq \mathrm{Vol}(A)^{1-t} \mathrm{Vol}(B)^t,
\]
where $\mathrm{Vol}(\,\cdot\,)$ denotes the Lebesgue measure on $\R^n.$
It is more generally intimately related to the study of log-concave measures which is of considerable importance in convex geometry, probability theory and statistics. In particular, many geometric and functional inequalities for uniformly log-concave probability  measures can be derived from Theorem 1 (see in particular the paper \cite{BL} by Bobkov and Ledoux). We refer to \cite{Gar} for a thorough presentation of the subject as well as for historical comments on Theorem \ref{thm:PL}. 

The question of extending the Pr\'ekopa-Leindler inequality outside the flat space framework has been tackled by many authors in recent years and turned out to be extremely fruitful in Geometry, Analysis and Probability. A first step has been accomplished by Cordero-Erausquin, McCann and Schmuckenschl\"ager in \cite{CEMS1,CEMS2}, who obtained extensions of the Pr\'ekopa-Leindler inequality on Riemannian manifolds with a lower bounded Ricci curvature.  Their extension is closely related to displacement convexity properties of entropic functionals, first introduced by McCann in \cite{McC} in the flat space framework, and then extended to Riemannian manifolds by Otto and Villani \cite{OV} and von Renesse and Sturm \cite{vRS}. This displacement convexity formulation is actually equivalent to lower bounds on the Ricci curvature and led to the Lott-Sturm-Villani \cite{LV, Stu1,Stu2} definition of metric measure spaces with lower bounded Ricci curvature which makes sense even in a non-smooth framework. 

In a similar vein, it would also be satisfactory to extend the Pr\'ekopa-Leindler inequality to discrete frameworks such as graphs (which are not covered by the Lott-Sturm-Villani theory). Several general definitions of discrete spaces with lower bounded curvature were recently proposed, in particular by Bonciocat and Sturm \cite{BS09}, Ollivier \cite{Oll}, Ollivier and Villani \cite{OV12}, Erbar and Maas \cite{EM}, Hillion \cite{Hil} or the authors \cite{GRST}. While these different definitions are all efficient at the level of functional inequalities and are satisfied by a large collection of classical graphs, none of them really succeeds in leading to a satisfactory Pr\'ekopa-Leindler or Brunn-Minkowski inequality on those spaces. 

However, for at least two specific discrete spaces, convincing Pr\'ekopa-Leindler type inequalities already exist. 

The first one, is the celebrated Four Functions Theorem on the discrete hypercube $\{0,1\}^n$ by Ahlswede and Daykin \cite{AD}. To recall its statement, we will need the following notation. The discrete hypercube will be denoted by $\Omega_n := \{0,1\}^n$ and for all $x=(x_1,\dots,x_n), \ y=(y_1,\dots,y_n) \in \Omega_n$, one defines 
\[x \wedge y :=(\min(x_1,y_1),\dots,\min(x_n,y_n)) \quad \mbox{ and } \quad x \vee y :=(\max(x_1,y_1),\dots,\max(x_n,y_n))\,.\] 
\begin{thm}[Ahlswede-Daykin]\label{thm:4FT}
Suppose that $f, g, h, k \colon \Omega_n \to \R^+$ are such that
\[
f(x)g(y) \leq h(x  \wedge y)k(x \vee y), \qquad \forall x,y \in \Omega_n\,,
\]
then
\[
\sum_{x \in \Omega_n}  f(x)  \sum_{x\in \Omega_n} g(x)  \leq \sum_{x \in \Omega_n}  h(x) \sum_{x \in \Omega_n} k(x).
\]
\end{thm}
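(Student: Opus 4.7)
The plan is to prove the Four Functions Theorem by first treating a one-dimensional base case and then lifting it to arbitrary $n$ via a clean induction.

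First I would establish the case $n=1$ by an elementary scalar computation. Writing $A = h(0)k(0)$, $B = h(1)k(1)$, $C = h(0)k(1)$, $D = h(1)k(0)$, the hypothesis yields $f(0)g(0) \leq A$, $f(1)g(1) \leq B$ and $f(0)g(1), f(1)g(0) \leq C$; note that $1 \wedge 0 = 0$ and $1 \vee 0 = 1$ so both mixed pairs $(0,1)$ and $(1,0)$ produce the same bound $C$. Multiplying the two diagonal inequalities gives the key multiplicative constraint
\[ (f(0)g(1))(f(1)g(0)) = (f(0)g(0))(f(1)g(1)) \leq AB = CD. \]
Expanding $(f(0)+f(1))(g(0)+g(1))$, matters reduce to showing $f(0)g(1) + f(1)g(0) \leq C + D$. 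This follows by maximizing $x+y$ subject to $0 \leq x,y \leq C$ and $xy \leq CD$: when $C \leq D$ the product constraint is vacuous and the maximum is $2C \leq C+D$, while when $C > D$ the maximum is attained at $(C,D)$.

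Next I would induct on $n$. Assuming the result on $\Omega_n$, I fix $x, y \in \Omega_n$ and apply the $n=1$ case to the four functions $s \mapsto f(x,s)$, $t \mapsto g(y,t)$, $u \mapsto h(x \wedge y, u)$, $v \mapsto k(x \vee y, v)$; the required pointwise hypothesis on $\Omega_1$ follows from the componentwise identities $(x,s) \wedge (y,t) = (x \wedge y, s \wedge t)$ and $(x,s) \vee (y,t) = (x \vee y, s \vee t)$. Setting $\tilde f(x) := f(x,0)+f(x,1)$ and defining $\tilde g, \tilde h, \tilde k$ analogously on $\Omega_n$, the base case yields $\tilde f(x)\tilde g(y) \leq \tilde h(x \wedge y)\tilde k(x \vee y)$, and the inductive hypothesis applied to the tilded functions completes the argument since summing $\tilde f$ over $\Omega_n$ reproduces the sum of $f$ over $\Omega_{n+1}$.

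The main obstacle is the base case: the direct estimates $f(0)g(1), f(1)g(0) \leq C$ only yield $2C$ for their sum, which is insufficient when $D < C$, so the argument genuinely requires the multiplicative combination of the diagonal hypotheses to unlock the constraint $xy \leq CD$. Once this combinatorial nugget is in place, the tensorization step is automatic. This elementary route sits alongside the transport-theoretic approach developed in the present paper, where the same inequality will be recovered as a consequence of displacement convexity of entropy on $\Z$ restricted to $\{0,1\}$.
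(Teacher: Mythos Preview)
Your proof is correct. The induction step is essentially identical to the paper's (reduce from $\Omega_n$ to $\Omega_{n-1}$ by summing out one coordinate, which requires the $n=1$ case applied to the sliced functions), so the only genuine difference lies in how the base case $n=1$ is handled.

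For $n=1$ you give the classical elementary argument of Ahlswede and Daykin: after matching the diagonal terms $f(0)g(0)\leq A$ and $f(1)g(1)\leq B$, the crux is the scalar lemma that $x,y\in[0,C]$ with $xy\leq CD$ forces $x+y\leq C+D$, which you prove by a short optimization. The paper instead recasts the base case in transport language: given probabilities $\nu_1,\nu_2$ on $\{0,1\}$, it constructs the monotone coupling $\pi$ and observes that the push-forward of $\pi$ under $(x,y)\mapsto(x\wedge y,x\vee y)$ is again a coupling of $\nu_1,\nu_2$ (possibly in reversed order); integrating the hypothesis against $\pi$ and invoking the dual formula $\log\int e^{\varphi}\,dm_1=\sup_\nu\{\int\varphi\,d\nu-H(\nu|m_1)\}$ yields the inequality after optimizing over $\nu_1,\nu_2$.

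Your route is shorter and entirely self-contained; it is exactly the original combinatorial proof. The paper's route is less elementary but is the whole point of the article: it exhibits the Four Functions Theorem as an instance of the same coupling-plus-entropy-duality mechanism that later yields the Klartag--Lehec inequality on $\Z$ and the displacement convexity of entropy. In particular the paper's argument makes transparent why the $t=1/2$ symmetry matters and why the method extends to other functionals $\Phi$ admitting a dual representation, whereas the algebraic trick $AB=CD$ is specific to this setting.
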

Note that this result mimics the statement of Theorem \ref{thm:PL} for $t=1/2$ on $\Omega_n$. Theorem \ref{thm:4FT} has important implications in terms of correlation inequalities, as it gives back in particular the classical FKG inequality which has a lot of applications in percolation and statistical mechanics \cite{FKG}. 

The second discrete form of the Pr\'ekopa-Leindler Inequality we will consider is a recent one due to Klartag and Lehec \cite{KL}, and holds on the space $\Z$ of integers. Denote by $\lceil \cdot \rceil$ and $\lfloor \cdot \rfloor$ the ceiling and floor functions respectively.
\begin{thm}\label{thm:KL}
Suppose that $f,g,h,k : \Z\to \R^+$ are such that
\begin{equation}\label{eq:hyp}
f(x)g(y) \leq h\left(\left\lfloor \frac{x+y}{2}\right \rfloor\right)k\left(\left\lceil \frac{x+y}{2} \right\rceil\right),\qquad \forall x,y \in \Z.
\end{equation}
Then 
\[
\left(\sum_{x\in \Z} f(x)\right)\left(\sum_{y\in \Z} g(y)\right)\leq \left(\sum_{x\in \Z} h(x)\right)\left(\sum_{y\in \Z} k(y)\right).
\]
\end{thm}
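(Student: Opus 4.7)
The plan is to deduce Theorem \ref{thm:KL} from a discrete displacement-convexity of entropy on $\Z$, which will be established separately in the paper and constitutes its main technical contribution. The desired statement is: for any two probability measures $\mu_0,\mu_1$ on $\Z$ with finite entropy, one can construct a coupling $\pi$ of $(\mu_0,\mu_1)$ whose pushforwards $\bar\mu_0,\bar\mu_1$ under the maps $(x,y)\mapsto\lfloor(x+y)/2\rfloor$ and $(x,y)\mapsto\lceil(x+y)/2\rceil$ satisfy
\[
\mathrm{Ent}(\bar\mu_0)+\mathrm{Ent}(\bar\mu_1)\leq \mathrm{Ent}(\mu_0)+\mathrm{Ent}(\mu_1),
\]
where $\mathrm{Ent}(\rho):=\sum_{z}\rho(z)\log\rho(z)$. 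Granted this, I now sketch the reduction.

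Without loss of generality I may assume $F:=\sum f$, $G:=\sum g$, $H:=\sum h$, $K:=\sum k$ are all finite and strictly positive. Normalize $f,g$ to probability measures $\mu=f/F$ and $\nu=g/G$ on $\Z$, let $\pi$ be a coupling of $(\mu,\nu)$ provided by the displacement-convexity statement, and denote its midpoint pushforwards by $\bar\mu$ and $\bar\nu$. Taking the logarithm of the pointwise inequality \eqref{eq:hyp} (valid whenever $f(x)g(y)>0$) and integrating against $\pi$ yields
\[
\E_\pi[\log f(X)]+\E_\pi[\log g(Y)]\leq \E_{\bar\mu}[\log h]+\E_{\bar\nu}[\log k].
\]

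Direct computation gives $\E_\pi[\log f(X)]=\log F+\mathrm{Ent}(\mu)$ and $\E_\pi[\log g(Y)]=\log G+\mathrm{Ent}(\nu)$. Meanwhile, Jensen's inequality applied to the concave function $\log$ against the probability measure $\bar\mu$ shows
\[
\E_{\bar\mu}[\log h]=\mathrm{Ent}(\bar\mu)+\sum_{x}\bar\mu(x)\log\bigl(h(x)/\bar\mu(x)\bigr)\leq \mathrm{Ent}(\bar\mu)+\log H,
\]
and analogously $\E_{\bar\nu}[\log k]\leq\mathrm{Ent}(\bar\nu)+\log K$. Assembling these bounds gives
\[
\log F+\log G+\mathrm{Ent}(\mu)+\mathrm{Ent}(\nu)\leq \log H+\log K+\mathrm{Ent}(\bar\mu)+\mathrm{Ent}(\bar\nu),
\]
and invoking the displacement-convexity inequality cancels the four entropy terms, leaving $FG\leq HK$.

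The reduction itself is standard in spirit: it mirrors the classical derivation of the Pr\'ekopa-Leindler inequality from displacement convexity of entropy along Wasserstein geodesics in $\R^n$. The genuine difficulty, and the step I expect to be the main obstacle, is the construction of the coupling $\pi$: on $\Z$ there is no canonical geodesic interpolation between probability measures, so the required discrete midpoint coupling must be built by hand via an optimal-transport argument tailored to the floor/ceiling operation. That construction is the core of the paper and is not addressed by the reduction above.
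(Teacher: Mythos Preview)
Your reduction is correct and is essentially the paper's own argument: derive Theorem~\ref{thm:KL} from the displacement-convexity statement (Theorem~\ref{thm:KLentropic}) by integrating the logarithm of \eqref{eq:hyp} against the coupling and then invoking the Gibbs/duality bound $\int \log h\,d\bar\mu \le \mathrm{Ent}(\bar\mu)+\log\sum h$. The only cosmetic differences are that the paper optimizes over all compactly supported $\nu_0,\nu_1$ (using the dual formula \eqref{esp} on both sides) rather than fixing $\nu_0=f/F$, $\nu_1=g/G$, and that it first truncates to $f^{\varepsilon,\kappa}=\max(\varepsilon,\min(f,\kappa))$ so that the compact-support hypothesis of Theorem~\ref{thm:KLentropic} and the integrability of the logarithms are guaranteed, recovering the general case by monotone convergence.
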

As we will see in Section \ref{Sec2}, Theorem \ref{thm:KL} implies Theorem \ref{thm:4FT} for $n=1$ (which then gives the full conclusion by induction, see the proof of Theorem \ref{thm:4FT} in Section \ref{Sec1}). Moreover Theorem~\ref{thm:KL} implies back Theorem \ref{thm:PL} for $t=1/2$ (and thus for all other values of $t$). The proof given by Klartag and Lehec in \cite{KL} relies on rather sophisticated tools of stochastic analysis on the Poisson space and in particular on a stochastic representation formula for the relative entropy functional with respect to the Poisson distribution on the (non-negative) integers.

As already stated above, the main objective of the present paper is to recover Theorems~\ref{thm:4FT} and \ref{thm:KL} by means of optimal transport tools.  In the continuous setting, optimal transport is indeed a very efficient way to establish functional inequalities (see \cite{Vil1,Vil2} and the references therein) and it is a challenging question to see how these powerful techniques can be adapted to the discrete world. To make this introduction more self-contained and to illustrate the difficulties in dealing with discrete structures, let us briefly recall a classical transport proof of Theorem \ref{thm:PL} in dimension $1$.
\proof[Proof of Theorem~\ref{thm:PL} for $d=1$] 
Without loss of generality, one can assume that $\int_{\R} f(x)\,dx = \int_{\R} g(y)\,dy=1$, with $f$ and $g$ two positive and continuous functions. Defining $\mu(dx) = f(x)\,dx$, $\nu(dy)= g(y)\,dy$, a natural transport map between the probability measures $\mu$ and $\nu$ is given by $T(x) = F_{\nu}^{-1}\circ F_\mu(x)$, where $F_\mu(x) = \int_{-\infty}^x f(u)\,du$, $x\in \R$, and $F_{\nu}(y) = \int_{-\infty}^y g(v)\,dv$, $y\in \R$, are the cumulative distribution functions of $\mu$ and $\nu$. The change of variable formula immediately gives the following relation between $f$ and $g$:
\begin{equation}\label{eq:Monge-Ampere}
f(x)  = g(T(x))T'(x),\qquad \forall x \in \R.
\end{equation}
Plugging $y = T(x)$ into \eqref{eq:PLhyp} one gets by change of variables ($z=(1-t)x+tT(x)$, note that $T$ is increasing by construction)
\begin{align*}
\int_{\R} h(z)\,dz & = \int_{\R} h((1-t)x + t T(x)) [(1-t) + t T'(x)]\,dx\\
& \geq  \int_{\R} f(x)^{1-t} g(T(x))^t T'(x)^t\,dx\\
& =  \int_{\R} f(x)^{1-t} f(x)^t\,dx = 1,
\end{align*}
where the inequality comes from  \eqref{eq:PLhyp} and the arithmetic-geometric inequality $(1-t)a+tb \geq a^{1-t} b^t$, $a,b\geq0$, $t \in [0,1]$ (appplied to $a=1$ and $b=T'(x)$), while the last equality comes from \eqref{eq:Monge-Ampere}.
\endproof
The proof for $n \geq 2$ is done by induction (see \textit{e.g} the proof of \cite[Theorem 2.13]{Led}). It is also possible to prove this result directly in dimension $n$, by using the Brenier or the Knothe transport maps and the Monge-Amp\`ere equation. See \cite[Chapter 6]{Vil1} for details. Note that the use of coupling arguments for establishing Brunn-Minkowski type inequalities goes back at least to Knothe \cite{Kno}.

Analyzing the proof above immediately reveals two obvious obstacles that prevent to export it easily to the discrete setting:
\begin{enumerate}
\item Transport \emph{maps} between probability measures $\mu$ and $\nu$ usually do not exist when the space is discrete and one often needs to cut the mass of atoms of the source measure $\mu$ to reconstruct the target measure $\nu$;
\item Even if there is a transport map $T$ sending $\mu$ on $\nu$, there is no Jacobian equation such as \eqref{eq:Monge-Ampere}.
\end{enumerate}

In the case of Theorem \ref{thm:4FT} and \ref{thm:KL}, it turns out that these difficulties can be circumvented. 
It would be useless at this point to state general rules, however it seems at least that in both situations choosing $t=1/2$ helps a lot by introducing symmetry and compensations to overcome the lack of Jacobian equation. 

In fact, we will go beyond Theorem \ref{thm:4FT} and \ref{thm:KL} by proving, by transport arguments, a stronger statement: namely an entropic version of the Pr\'ekopa-Leindler Inequality (that we may also call displacement convexity of entropy), see Theorem \ref{thm:KLentropic} for a precise statement. In that sense, since such an entropic statement implies the Klartag-Lehec version of the Pr\'ekopa-Leindler Inequality on $\mathbb{Z}$, which in turn, at the price of an obvious induction step, implies the Four Functions theorem, all results appear to be the consequence of one single (transport) proof. Moreover, our displacement convexity result on the  integers, as the mesh size goes to 0, converges to the classical displacement convexity  of entropy on the line (for $t=1/2$), obtained by McCann in \cite{McC} which shows the compatibility of our results to the well-known equivalent statement in the continuous.

\bigskip

The paper is organized as follows.

In Section 1, we give a simple proof of Theorem \ref{thm:4FT}, based on the construction of an explicit coupling in dimension $n=1$ and on the dual formulation of the relative entropy functional. As already explained, Theorem \ref{thm:4FT} can also be seen as a consequence of Theorem  \ref{thm:KL}. However, the proof is very simple and it seemed to us that it nicely illustrates the power of the transport techniques in discrete and therefore it is worth a separate presentation. Then we show how to recover a significant part of the classical Pr\'ekopa-Leindler inequality from Theorem \ref{thm:4FT}, passing from the discrete to the continuous by means of the Central Limit Theorem. 

In Section 2, we prove a stronger entropic version of Theorem \ref{thm:KL}, namely Theorem \ref{thm:KLentropic}, based on the one-dimensional monotone rearrangement coupling. We also show how to fully recover the Pr\'ekopa-Leindler inequality starting from Theorem \ref{thm:KL}, again passing from discrete to continuous, but here using instead that the mesh size of the grid shrinks to $0$.  

Finally, Section 3 is devoted to curved versions of Theorem \ref{thm:KL} applying to probability measures with a log-concave probability mass function.

\section{The Four Functions theorem}\label{Sec1}
\subsection{A transport proof of the Four Functions Theorem}
In the following, we prove the Four Functions Theorem using transport ingredients and a duality formula. 

We will use the following notations. The set of all probability measures on $\Omega_n = \{0,1\}^n$ will be denoted by $\mathcal{P}(\Omega_n)$ and the set of functions on $\Omega_n$ by $\mathcal{F}(\Omega_n)$. For all $a \in \Omega_1$ and $h \in \mathcal{F}(\Omega_n)$, the function $h^a: \Omega_{n-1} \to \R$ is defined by
\[
h^a(x)= h(x,a),\qquad \forall x \in \Omega_{n-1}.
\]
For convenience, we restate the Ahlswede-Daykin Theorem with an additive hypothesis (which corresponds to Theorem \ref{thm:4FT} with $f=e^{h_1}$, $g=e^{h_2}$, $h=e^{h_3}$ and $k=e^{h_4}$).
\begin{thm} \label{thm:4FTgen}
Let $n \geq 1$. Suppose that $h_1$, $h_2$, $h_3$, $h_4 \colon \Omega_n \to \R$ are such that
\[
h_1(x) + h_2(y) \leq h_3(x  \wedge y) + h_4(x \vee y), \qquad \forall x,y \in \Omega_n.
\]
Then
\[
\sum_{x \in \Omega_n} e^{h_1(x)} \sum_{x \in \Omega_n} e^{h_2(x)} \leq \sum_{x \in \Omega_n} e^{h_3(x)}\sum_{x \in \Omega_n} e^{h_3(x)} .
\]
\end{thm}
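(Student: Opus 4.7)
The plan is to establish the additive Four Functions Theorem by combining a duality (variational) formula for the log--partition function with an explicit one-dimensional coupling, and then to bootstrap to arbitrary $n$ by induction. For any probability measure $\mu$ on $\Omega_n$ and any $h\colon \Omega_n\to\R$, the Fenchel--Young inequality gives
\[
\sum_{x\in\Omega_n} h(x)\mu(x)-\sum_{x\in\Omega_n}\mu(x)\log\mu(x)\ \leq\ \log\sum_{x\in\Omega_n}e^{h(x)},
\]
with equality when $\mu$ is the Gibbs measure $\mu^\star(x)=e^{h(x)}/\sum_y e^{h(y)}$. Taking logarithms, it suffices to prove $\log\sum e^{h_1}+\log\sum e^{h_2}\leq \log\sum e^{h_3}+\log\sum e^{h_4}$, which becomes an entropic statement: saturate the variational bound on the left with the Gibbs measures $\mu_1,\mu_2$ attached to $h_1,h_2$, and dominate the right-hand side by any convenient choice of measures $\tilde\mu_3,\tilde\mu_4$.

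For the base case $n=1$, write $a=\mu_1(\{1\})$, $b=\mu_2(\{1\})$ and assume without loss of generality $a\leq b$. Define the \emph{monotone coupling} $\pi\in\mathcal{P}(\Omega_1\times\Omega_1)$ by
\[
\pi(0,0)=1-b,\quad \pi(0,1)=b-a,\quad \pi(1,0)=0,\quad \pi(1,1)=a.
\]
Under $\pi$ one has $X\leq Y$ almost surely, hence $X\wedge Y=X$ and $X\vee Y=Y$, so the pushforward of $\pi$ under $(x,y)\mapsto(x\wedge y,x\vee y)$ has marginals $\tilde\mu_3=\mu_1$ and $\tilde\mu_4=\mu_2$. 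Integrating the hypothesis against $\pi$ yields
\[
\int h_1\,d\mu_1+\int h_2\,d\mu_2\ \leq\ \int h_3\,d\tilde\mu_3+\int h_4\,d\tilde\mu_4,
\]
and since the Shannon entropies of $\tilde\mu_3,\tilde\mu_4$ match those of $\mu_1,\mu_2$ term-by-term, combining this with the equality case of the variational formula on the left and its inequality form on the right gives the $n=1$ conclusion.

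The induction step is routine. For $x\in\Omega_{n-1}$ and $i=1,2,3,4$ define $H_i(x):=\log\sum_{a\in\Omega_1}e^{h_i(x,a)}$. Because $(x,a)\wedge(x',b)=(x\wedge x',a\wedge b)$ and similarly for $\vee$, the four one-dimensional functions $a\mapsto h_1(x,a)$, $b\mapsto h_2(x',b)$, $a\mapsto h_3(x\wedge x',a)$, $b\mapsto h_4(x\vee x',b)$ satisfy the hypothesis on $\Omega_1$ for any fixed $x,x'\in\Omega_{n-1}$. The $n=1$ case then delivers $H_1(x)+H_2(x')\leq H_3(x\wedge x')+H_4(x\vee x')$, and since $\sum_{x\in\Omega_{n-1}}e^{H_i(x)}=\sum_{z\in\Omega_n}e^{h_i(z)}$, the induction hypothesis applied to $H_1,\dots,H_4$ closes the argument.

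The only genuinely new step, and what I would expect to be the main obstacle in a general discrete setting, is producing at $n=1$ a coupling under which the pair $(X\wedge Y,X\vee Y)$ has the same marginals as $(X,Y)$, so that the Shannon entropies on the two sides of the variational bound cancel exactly. On $\Omega_1$ the monotone coupling accomplishes this because $(\min,\max)$ is then literally a rearrangement of $(x,y)$; any failure of this entropy-balancing would leave an uncontrolled residual term and the proof would collapse. It is precisely this phenomenon that makes $\{0,1\}$ tractable and suggests why analogous transport proofs on larger discrete spaces (such as $\Z$, treated in Section~\ref{Sec2}) require a more delicate rearrangement.
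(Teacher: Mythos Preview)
Your proof is correct and follows essentially the same approach as the paper: the Gibbs variational formula combined with the monotone coupling on $\Omega_1$, bootstrapped to general $n$ by the standard one-coordinate induction. The only cosmetic differences are that you invoke the symmetry $h_1\leftrightarrow h_2$ (valid since the hypothesis is symmetric in $x,y$) to reduce to one ordering of the marginals, whereas the paper treats both cases explicitly via its Lemma~\ref{transport}, and you plug in the optimizing Gibbs measures directly rather than proving the bound for all $\nu_1,\nu_2$ and then taking the supremum.
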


Recall the following duality formula involving the relative entropy functional. Let $m_n$ be the uniform measure on $\Omega_n$ and define for all probability measures $\nu$ on $\Omega_n$ 
\[
H(\nu | m_n) = \int \log \left(\frac{d\nu}{dm_n}\right)\,d\nu .
\]
Then, for any function $f : \Omega_n \to \R$, it holds
\begin{equation} \label{eq:dual}
\log \int e^{f}\,dm_n = \sup_{\nu \in \mathcal{P}(\Omega_n)} \left\{\int f\,d\nu - H(\nu|m_n)\right\} .
\end{equation}

In the proof of Theorem \ref{thm:4FTgen} we will also use the following coupling lemma whose proof is  elementary. We recall that if $\nu_1,\nu_2$ are two probability measures on a measurable space $(E,\mathcal{A})$, a coupling of $\nu_1$ and $\nu_2$ (in that order) is a probability measure $\pi$ on the product space $E\times E$ having $\nu_1$ as first marginal and $\nu_2$ as second marginal, that is to say such that 
\[
\pi(A\times E) = \nu_1(A)\qquad \text{and}\qquad \pi(E\times B) = \nu_2(B)
\]
for all $A,B \in \mathcal{A}.$ Recall also that that if $\mu$ is a probability measure on $(E,\mathcal{A}) $ and $S : E \to F$ a measurable map taking values in another measurable space $(F,\mathcal{B})$, then the image of $\mu$ under the map $S$ (or push forward of $\mu$ under the map $S$) is the probability measure denoted by $S_\#\mu$ defined as $S_\#\mu (B) = \mu(S^{-1}(B))$, $B \in \mathcal{B}$.

\begin{lem} \label{transport}
Let $\nu_1 , \nu_2 \in \mathcal{P}(\Omega_1)$ and set $S \colon \Omega_1^2 \ni (x,y) \mapsto (x \wedge y, x \vee y)$.
\begin{itemize}
\item[$(i)$]  if $\nu_2(0) \leq \nu_1(0)$ then there exists a (unique) coupling $\pi$  of $\nu_1$ and  $\nu_2$ such that
$\widetilde \pi : = S \sharp \pi$ is also a coupling of $\nu_1$ and $\nu_2$.
Moreover in this case $\pi = \widetilde\pi$ and $\pi(0,0)=\nu_2(0)$, $\pi(1,0)=0$, $\pi(0,1)=\nu_1(0)-\nu_2(0)$ and
$\pi(1,1)=\nu_1(1)$.
\item[$(ii)$]
if $\nu_2(0) \geq \nu_1(0)$ then there exists a (unique) coupling $\pi$ of $\nu_1$ and $\nu_2$ such that $\widetilde \pi = S \sharp \pi$ is a coupling  of $\nu_2, \nu_1$. Moreover $\pi(0,0)=\widetilde \pi(0,0)=\nu_1(0)$, $\pi(1,1)=\widetilde \pi(1,1)=\nu_2(1)$,
$\pi(0,1)=\widetilde \pi(1,0) = 0$ and $\pi(1,0)=\widetilde \pi(0,1) = \nu_2(0) - \nu_1(0)$. 
\end{itemize}
\end{lem}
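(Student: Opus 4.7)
The plan is a direct computation on a four-parameter family. First I would parameterize any coupling $\pi$ of $\nu_1,\nu_2$ by its atomic weights $p_{ij}:=\pi(i,j)$ for $i,j\in\{0,1\}$; the marginal constraints $p_{00}+p_{01}=\nu_1(0)$, $p_{10}+p_{11}=\nu_1(1)$, $p_{00}+p_{10}=\nu_2(0)$, $p_{01}+p_{11}=\nu_2(1)$ leave exactly one degree of freedom (for instance $p_{10}$). Next I would record the action of $S$ on the four atoms: $S(0,0)=(0,0)$, $S(1,1)=(1,1)$, and $S(0,1)=S(1,0)=(0,1)$. This gives
\[
\widetilde\pi(0,0)=p_{00},\quad \widetilde\pi(1,1)=p_{11},\quad \widetilde\pi(0,1)=p_{01}+p_{10},\quad \widetilde\pi(1,0)=0,
\]
so passing from $\pi$ to $\widetilde\pi := S_\#\pi$ simply transports the mass at $(1,0)$ onto $(0,1)$ and leaves the diagonal untouched.

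For part $(i)$, the requirement that $\widetilde\pi$ have first marginal $\nu_1$ reads $p_{00}+p_{01}+p_{10}=\nu_1(0)$; comparing with the identity $p_{00}+p_{01}=\nu_1(0)$ coming from $\pi$ itself, this forces $p_{10}=0$. The remaining marginal equations then determine the rest uniquely: $p_{00}=\nu_2(0)$, $p_{01}=\nu_1(0)-\nu_2(0)$, $p_{11}=\nu_1(1)$. Nonnegativity of $p_{01}$ is exactly the assumption $\nu_2(0)\leq\nu_1(0)$, and the equality $\pi=\widetilde\pi$ is immediate from $p_{10}=0$.

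For part $(ii)$, I would repeat the argument but now impose that $\widetilde\pi$ has first marginal $\nu_2$ and second $\nu_1$. The second marginal at $0$ reduces to $p_{00}=\nu_1(0)$ because $\widetilde\pi(1,0)=0$, and combined with $p_{00}+p_{01}=\nu_1(0)$ this forces $p_{01}=0$. The first marginal condition $p_{00}+p_{01}+p_{10}=\nu_2(0)$ then gives $p_{10}=\nu_2(0)-\nu_1(0)$, and finally $p_{11}=\nu_2(1)$ from the last marginal. Nonnegativity of $p_{10}$ is exactly the hypothesis $\nu_2(0)\geq\nu_1(0)$, and the announced formulas for $\widetilde\pi$ are then read off from the description of $S_\#$ above.

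There is no genuine obstacle here: the whole statement is a $2\times 2$ linear system with a single nonnegativity check, and the role of each hypothesis is precisely to guarantee that the one free coefficient forced by the constraint on $\widetilde\pi$ has the correct sign. The only point demanding a little care is to keep track of which marginal of $\widetilde\pi$ equals which of $\nu_1,\nu_2$, so that the two cases do not get conflated.
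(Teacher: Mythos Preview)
Your proof is correct and follows essentially the same approach as the paper: both arguments rest on the structural observation that $\widetilde\pi(1,0)=0$, $\widetilde\pi(0,0)=\pi(0,0)$, $\widetilde\pi(1,1)=\pi(1,1)$, and then solve the resulting linear system of marginal constraints. You have simply written out explicitly the details that the paper displays in a table and leaves to the reader.
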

\begin{rem}
The coupling $\pi$ in $(i)$ (resp. $(ii)$) is nothing but the non-decreasing (non-increasing) rearrangement coupling.

The above lemma is very much one-dimensional. In fact, it is easy to construct examples of measures $\nu_1 , \nu_2 \in \mathcal{P}(\Omega_n)$, for $n \geq 2$, such that there does not exist any coupling $\pi$  of $\nu_1$ and  $\nu_2$ such that
$\widetilde \pi : = S \sharp \pi$ (with $S$ that acts coordinate by coordinate) is a coupling of $\nu_1$ and $\nu_2$ or a coupling of $\nu_2$ and $\nu_1$.
\end{rem}

\begin{proof}
We will first prove Item $(i)$.
In the following diagram we represent the couplings $\pi$ on the left, and $\widetilde \pi$ on the right, with their marginals.
$$
\begin{tabular}{|c||c|c||c|}
\hline
\backslashbox{$x$}{$y$} 
& 0 & 1 & \\
\hline
 $0$ & $\pi(0,0)$ & $\pi(0,1)$ & $\nu_1(0)$ \\
\hline
 $1$ & $\pi(1,0)$ & $\pi(1,1)$ & $\nu_1(1)$ \\
 \hline
  & $\nu_2(0)$ & $\nu_2(1)$ & \\
  \hline
\end{tabular}
\quad \stackrel{S}{\longrightarrow}
\quad 
\begin{tabular}{|c||c|c||c|}
\hline
\backslashbox{$x\wedge y$}{$x \vee y$} 
& 0 & 1 & \\
\hline
 $0$ & $\pi(0,0)$ & $\pi(0,1) + \pi(1,0)$ & $\nu_1(0)$ \\
\hline
 $1$ & $0$ & $\pi(1,1)$ & $\nu_1(1)$ \\
 \hline
  & $\nu_2(0)$ & $\nu_2(1)$ & \\
  \hline
\end{tabular}
$$
Once one observes that necessarily $\widetilde \pi(1,0)=0$ (since there do not exist $x,y \in \Omega_1$
with $x \wedge y =1$ and $x \vee 1=0$), and $\widetilde \pi(0,0)= \pi(0,0)$ and 
$\widetilde \pi(1,1)= \pi(1,1)$, then all the values of $\widetilde \pi(i,j)$ and $\pi(i,j)$ can be deduced from the marginals (details are left to the reader).
A similar reasoning leads to the conclusion of Item $(ii)$. The uniqueness part is obvious from the construction.
\end{proof}

\begin{proof}[Proof of Theorem \ref{thm:4FTgen}]
The proof goes by induction on $n\ge 1$. We will prove the base case towards the end of the proof. Assume first  that the result holds on $\Omega_{n-1}$. Then choose
four functions $h_1, h_2, h_3, h_4 \colon \{0,1\}^n \to \mathbb{R}$ satisfying
\begin{equation} \label{condition}
h_1(x) + h_2(y)  \leq h_3(x  \wedge y) + h_4(x \vee y), \qquad \forall x,y \in \Omega_n .
\end{equation}
Fix $a, b \in \{0,1\}$ ; applying Condition \eqref{condition} to $x=(x_1',\dots,x_{n-1}',a)$ and $y=(y_1',\dots,y_{n-1}',b)$ we get that
\[
h_1^a(x') + h_2^b(y') \leq h_3^{a \wedge b}(x'  \wedge y') + h_4^{a \vee b}(x' \vee y'), \qquad \forall x',y' \in \Omega_{n-1} 
\]
which is precisely the condition of the theorem in dimension $n-1$ for the four functions 
$h_1^{a}, h_2^{b}, h_3^{a \wedge b}$ and $h_4^{a \vee b}$. Applying the induction hypothesis we conclude that
\[
\log \left( \sum_{x \in \Omega_{n-1}}e^{h_1^a(x)} \right) + \log \left(  \sum_{x \in \Omega_{n-1}}e^{h_2^b(x)} \right)
\leq 
\log \left(  \sum_{x \in \Omega_{n-1}}e^{h_3^{a \wedge b}(x)} \right) + \log \left( \sum_{x \in \Omega_{n-1}}e^{h_4^{a \vee b}(x)} \right) .
\]
The latter holds for all $a, b \in \Omega_1$. Hence, if we set $H_i(a) := \log \left( \sum_{x \in \Omega_{n-1}}e^{h_i^a(x)}\right)$,
for $i \in \{1,2,3,4\}$, we have
\[
H_1(a) + H_2(b) \leq H_3(a \wedge b) +  H_4(a \vee b) \qquad \forall a, b \in \Omega_1 .
\]
Now applying the result on $\Omega_1$, we conclude that
\[
\log \left( \sum_{x \in \Omega_1} e^{H_1(x)} \right) + \log \left( \sum_{x \in \Omega_1} e^{H_2(x)} \right)  
\leq 
\log \left( \sum_{x \in \Omega_1} e^{H_3(x)} \right)  + \log \left( \sum_{x \in \Omega_1} e^{H_4(x)} \right) \,.
\]
This leads to the desired conclusion since, by construction, for all $i\in \{1,2,3,4\}$ it holds $\log \left( \sum_{x \in \Omega_1} e^{H_i(x)} \right)
= \log \left( \sum_{x \in \Omega_n} e^{h_i(x)} \right) $.

Hence, in order to conclude the proof we need to prove the theorem on $\Omega_1$. 
To that purpose, fix four functions $h_1, h_2, h_3, h_4 \colon \Omega_1 \to \mathbb{R}$ satisfying Condition \eqref{condition} (with $n=1$) and let $\nu_1,\nu_2 \in \mathcal{P}(\Omega_1)$. Let us show that
\begin{equation}\label{eq:majoration}
\left(\int h_1\, d\nu_1-H(\nu_1|m_1)\right)  +  \left(\int h_2\, d\nu_2 -H(\nu_2|m_1)\right)  
\leq 
\log \left( \sum_{x \in \Omega_1} e^{h_3(x)} \right) + \log \left( \sum_{x \in \Omega_1} e^{h_4(x)} \right).
\end{equation}

First assume that $\nu_1(0) \leq \nu_2(0)$. Thanks to Item $(i)$ of Lemma \ref{transport} above, there exists a coupling $\pi$ of $\nu_1$ and $\nu_2$ such that the coupling $\widetilde\pi$ defined as the push forward of $\pi$ under the map $S:\Omega_1^2 \ni (x,y) \mapsto (x \wedge y, x \vee y)$ is still a coupling of $\nu_1$ and $\nu_2$. 
It follows from the very definition of the coupling, from Condition \eqref{condition}, and by definition of the push-forward, that
\begin{align}
\int h_1\, d\nu_1  +  \int h_2\, d\nu_2 
& =
\int_{\Omega_1^2} [h_1(x) + h_2(y)] \,d\pi(x,y)   
 \leq
\int_{\Omega_1^2} [h_3(x\wedge y) + h_4(x \vee y)] \,d\pi(x,y)    \label{eq:coupling} \\
& = 
\int_{\Omega_1^2} h_3(x) + h_4(y) \,d\widetilde\pi(x,y)    
 =
\int h_3 \,d\nu_1  +\int h_4 \,d\nu_2  . \nonumber
\end{align}
Therefore, by \eqref{eq:dual},
\begin{align*}
& \left(\int h_1\, d\nu_1-H(\nu_1|m_1)\right)  +  \left(\int h_2\, d\nu_2 -H(\nu_2|m_1)\right) 
\leq 
\left(\int h_3 \,d\nu_1 - H(\nu_1|m_1) \right) \\
& \phantom{AAAAAAAAAAAA}  + \left( \int h_4\, d\nu_2 - H(\nu_2|m_1) \right)
\leq 
\log \left( \sum_{x \in \Omega_1} e^{h_3(x)} \right) + \log \left( \sum_{x \in \Omega_1} e^{h_4(x)} \right),
\end{align*}
which proves \eqref{eq:majoration} in this case.

Now, if $\nu_1(0) > \nu_2(0)$, then according to Item $(ii)$ of Lemma \ref{transport}, there exists a coupling $\pi$ of $\nu_1$ and $\nu_2$  such that the probability $\widetilde{\pi} = S_\#\pi$ is now a coupling of $\nu_2$ and $\nu_1$ (in that order). Therefore, reasoning exactly as in \eqref{eq:coupling}, one gets $\int h_1\, d\nu_1  +  \int h_2\, d\nu_2 \leq \int h_3 \,d\nu_2  +\int h_4 \,d\nu_1$, from which one concludes that \eqref{eq:majoration} holds also in this case.

Finally, taking the supremum over $\nu_1$ and $\nu_2$ in \eqref{eq:majoration} gives , thanks to \eqref{eq:dual},
\[
\log \left( \sum_{x \in \Omega_1} e^{h_1(x)} \right) + \log \left( \sum_{x \in \Omega_1} e^{h_2(x)} \right)  
\leq 
\log \left( \sum_{x \in \Omega_1} e^{h_3(x)} \right)  + \log \left( \sum_{x \in \Omega_1} e^{h_4(x)} \right) \,.
\]
and completes the proof of Theorem \ref{thm:4FTgen}.
\end{proof}

A careful reading of the proof of Theorem \ref{thm:4FTgen} actually leads to a slightly more general result that we now describe. Consider a functional $\Phi$ on $\mathcal{F}(\Omega_1)$ and assume that it can be written as follows
\begin{equation} \label{dual}
\Phi(h) = \sup_{\nu \in \mathcal{P}(\Omega_1)} \left\{ \int h\,d\nu - \Psi(\nu)\right\},\qquad h \in \mathcal{F}(\Omega_1),
\end{equation}
where $\Psi : \mathcal{P}(\Omega_1) \to \R \cup \{\infty\}$ is a given function. Then, we define by induction a sequence of functions $\Phi^n$ on $\mathcal{F}(\Omega_n)$ as follows: $\Phi^1 = \Phi$ and for all $n\geq 2$, 
\[
\Phi^{n} (h) = \Phi (a \mapsto \Phi^{n-1} (h^a)),\qquad h \in \mathcal{F}(\Omega_n),
\]
where we recall that for all $a\in \Omega_1$ and $h \in \mathcal{F}(\Omega_n)$, the function $h^a: \Omega_{n-1} \to \R$ is defined by
$h^a(x)= h(x,a)$, $x \in \Omega_{n-1}$.

Following the exact same proof of Theorem \ref{thm:4FTgen} (details of which are left to the reader), we can conclude that, if $h_1$, $h_2$, $h_3$, $h_4 \colon \Omega_n \to \R$ are such that
\[
h_1(x) + h_2(y) \leq h_3(x  \wedge y) + h_4(x \vee y), \qquad \forall x,y \in \Omega_n,
\]
then
\[
\Phi^n(h_1) + \Phi^n(h_2) \leq \Phi^n(h_3) + \Phi^n(h_4) .
\]

This is a generalization of Theorem \ref{thm:4FTgen} since the relative entropy $\Psi(\nu)=H(\nu|m_n)$ leads to $\Phi(h)=\log (\int_{\Omega_1} e^h dm_1)$ by \eqref{eq:dual}, and therefore, by a straightforward induction, to $\Phi^n(h)=\log (\int_{\Omega_n}e^h dm_n)$.  However, we could not find any other explicit example of functional $\Phi$ and $\Phi^n$ of real interest.
One of the reasons can be found in Hardy, Littlewood and Polya \cite[Chapter 3]{HLP}. Indeed, studying the generalized mean $F^{-1}(\int F(h)dm_1)$, these authors prove that, under some mild assumptions, it must be that $F(x)=\kappa e^{cx}$ for some constants $\kappa,c$, leading back to the previous example.

Another natural example may be given by $\Psi(\nu)=+\infty$ for all $\nu$ expect one measure, say $m_1$, for which $\Psi(m_1)=0$. Then, $\Phi(h)=\int hdm_1$ and therefore $\Phi^n(h)=\int h dm_1^{\otimes n}$, where $m_1^{\otimes n}$ is the $n$-fold product of $m_1$, \textit{i.e.}\ $m_1^{\otimes n}=m_n$. In that case,  the conclusion above is nontrivial though being a consequence of the classical conclusion of the four functions theorem (by considering $\varepsilon h_i$ in the limit $\varepsilon \to 0$).

A further generalization may be as follows.
Let $U \colon [0,\infty) \to \mathbb{R}$ denote a semi-continuous, strictly convex function satisfying $\lim_{x \to \infty} U(x)/x = \infty$ and  $U(1) \geq 0$. Then, given $\mu,\nu \in \mathcal{P}(\Omega_n)$,  we set
$U_\mu(\nu) = \int U(f)d\mu$,
if $\nu$ is absolutely continuous with respect to $\mu$ with density $f$, and $U_\mu(\nu) = +\infty$ otherwise. With such a definition, the special choice $U(x)=x\log x$ amounts to $U_\mu(\nu)=H(\nu|\mu)$. Furthermore, since $U(1)\geq0$, by Jensen's inequality $U_\mu(\nu) \geq 0$ for all $\nu \in \mathcal{P}(\Omega_n)$. Also, for any $f \colon \{0,1\}^n \to \mathbb{R}$ and $\mu \in \mathcal{P}(\Omega_n)$, set
$\Lambda_\mu(f):=\sup_{\nu \in \mathcal{P}(\Omega_n)} \left( \int_{\Omega_n} f d\nu - U_\mu(\nu) \right)$
which generalizes \eqref{eq:dual}. For such $U$'s, as proved in \cite[Proposition 2.9]{GRS}, it holds
$$
\Lambda_\mu(f) = \inf_{t \in \mathbb{R}} \left\{ \int [U^*(f+t)-t ]d\mu \right\}
$$
and
$$
U_\mu(\nu)= \sup_{f} \left\{ \int fd\nu - \Lambda_\mu(f) \right\} = \sup_{f} \left\{ \int fd\nu - \int U^*(f)d\mu \right\}
$$
with $U^*(y):=\sup_{x >0} \{xy-U(x)\}$, $y \in \mathbb{R}$. For instance, the choice $U(x)=x^2/2$, $x \geq 0$ leads to $\Lambda_{m_1}(f)=\Var_{m_1}(f)+\int f dm_1 - \frac{1}{2}$ if $f(0)-f(1) \in [-2,2]$ and $\Lambda_{m_1}(f)= \max(f(0),f(1))-1$ otherwise. 
At the price of multiplying $h_i$ by a constant, we can assume that $\max h - \inf \leq 2$ so that $\Phi(h)= \Var_{m_1}(h)+\int h dm_1 - \frac{1}{2}$ is explicit so that one can, at least theoretically, express $\Phi^n$ in this case.

\subsection{From the Four Function Theorem to the Pr\'ekopa-Leindler Inequality}
Using the Four Functions Theorem, we shall prove the following weak version of the Pr\'ekopa-Leindler Inequality. We state and prove the result in dimension one, for simplicity, but it holds in any dimension with no extra complication besides presentation.

\begin{prop} \label{PL}
Let $f, g, h \colon \mathbb{R} \to \mathbb{R}$ be three continuous functions satisfying 
\[
\frac{1}{2}f(x) + \frac{1}{2}g(y) \leq h\left(\frac{x+y}{2}\right) \qquad \forall x,y \in \mathbb{R} .  
\] 
Assume furthermore that $h$ is convex and bounded from below. Then, it holds
\[
\left(\int_{\mathbb{R}} e^{f(x)} \,dx \right)^{1/2} \left(\int_{\mathbb{R}} e^{g(y)}\,dy \right)^{1/2} \leq \int_{\mathbb{R}} e^{h(z)} \,dz.
\]
\end{prop}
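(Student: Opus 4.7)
The plan is to deduce the inequality from Theorem~\ref{thm:4FTgen} through two approximation steps: a discretization of $f, g, h$ on the hypercube $\Omega_n$, then a passage to the continuum via the Central Limit Theorem that produces a Gaussian-weighted Pr\'ekopa--Leindler inequality, and finally a scaling argument that removes the Gaussian weight.

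For the discrete step, fix $n \geq 1$, set $\phi_n(k) := (2k - n)/\sqrt n$ for $k \in \{0, \ldots, n\}$, and for $x \in \Omega_n$ write $|x| := x_1 + \cdots + x_n$. Define four functions on $\Omega_n$ by
\[
h_1(x) := f(\phi_n(|x|)), \quad h_2(x) := g(\phi_n(|x|)), \quad h_3(x) = h_4(x) := h(\phi_n(|x|)).
\]
Since $|x \wedge y| + |x \vee y| = |x| + |y|$ and $\phi_n$ is affine, one has $\phi_n(|x \wedge y|) + \phi_n(|x \vee y|) = \phi_n(|x|) + \phi_n(|y|)$. Convexity of $h$ followed by the Pr\'ekopa--Leindler hypothesis then gives
\[
h_3(x \wedge y) + h_4(x \vee y) \geq 2\, h\!\left(\tfrac{\phi_n(|x|) + \phi_n(|y|)}{2}\right) \geq f(\phi_n(|x|)) + g(\phi_n(|y|)) = h_1(x) + h_2(y),
\]
so Theorem~\ref{thm:4FTgen} applies. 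Dividing its conclusion by $4^n$ and observing that $|X|$ is binomial $(n, 1/2)$ when $X$ is uniform on $\Omega_n$, we obtain, with $Z_n := \phi_n(|X|)$,
\[
\E[e^{f(Z_n)}] \, \E[e^{g(Z_n)}] \leq \big(\E[e^{h(Z_n)}]\big)^2.
\]

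For the CLT step, $Z_n$ converges in distribution to $Z \sim \mathcal{N}(0, 1)$; the continuity of $f, g, h$ together with the convexity and lower boundedness of $h$ should provide the uniform integrability needed to pass to the limit (see the obstacle note below). This yields the Gaussian-weighted inequality
\[
\int_\R e^{f}\, d\gamma \cdot \int_\R e^{g}\, d\gamma \leq \Big(\int_\R e^{h}\, d\gamma\Big)^2, \qquad d\gamma(z) := (2\pi)^{-1/2}e^{-z^2/2}\,dz.
\]
Now apply this to the rescaled triple $f_a(z) := f(z/a)$, $g_a(z) := g(z/a)$, $h_a(z) := h(z/a)$ for $a > 0$; the Pr\'ekopa--Leindler hypothesis, convexity, and lower boundedness are all preserved. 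The change of variables $u = z/a$ produces
\[
\int e^{f(u)} \gamma(au)\, du \cdot \int e^{g(u)} \gamma(au)\, du \leq \Big(\int e^{h(u)} \gamma(au)\, du\Big)^2.
\]
Letting $a \downarrow 0$, $\gamma(au) \nearrow (2\pi)^{-1/2}$ monotonically; monotone convergence then replaces each weighted integral by $(2\pi)^{-1/2}$ times the corresponding unweighted one, the constants cancel across the inequality, and taking square roots yields the proposition.

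The main obstacle is the uniform-integrability verification in the CLT step, since $f, g, h$ are a priori unbounded. The idea is to exploit the Pr\'ekopa--Leindler hypothesis in the form $f(u) \leq 2 h((u + y_0)/2) - g(y_0)$ (and its symmetric counterpart for $g$), controlling $e^{f(Z_n)}, e^{g(Z_n)}, e^{h(Z_n)}$ by a common function of $h$; combined with the lower bound on $h$ and the binomial/Gaussian tail estimates furnished by a local CLT, this gives enough uniform control. Cases in which some integral is infinite can be handled separately since the inequality is then trivial.
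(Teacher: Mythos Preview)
Your approach is essentially identical to the paper's: discretize on $\Omega_n$ via the normalized binomial, invoke Theorem~\ref{thm:4FTgen} using the convexity of $h$, pass to the Gaussian limit by the CLT, and then scale out the Gaussian weight by monotone convergence. The only substantive difference is how the unboundedness of $f,g,h$ is handled in the CLT step, and here your proposal has a real gap while the paper's argument is much simpler.

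You try to justify the passage $\E[e^{h(Z_n)}]\to\int e^{h}\,d\gamma$ by uniform integrability, appealing to the lower bound on $h$ and tail estimates for $Z_n$. But a lower bound on $h$ gives no upper control on $e^{h}$, and for convex $h$ close to $x^2/2$ from below (so that $\int e^{h}\,d\gamma<\infty$ but only barely) the uniform integrability of $e^{h(Z_n)}$ is genuinely delicate; your sketch does not address this. The paper sidesteps the issue entirely by truncation: first assume $f,g\le M$ (removed at the very end by monotone convergence), and use $h\ge -M$ to check that
\[
F_n(x)+G_n(y)\le \min(H_n(x\wedge y),3M)+\min(H_n(x\vee y),3M),
\]
so that Theorem~\ref{thm:4FTgen} may be applied with the \emph{bounded} function $H_n\wedge 3M$ in place of $H_n$. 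The CLT then applies trivially to bounded continuous functions, yielding $\int e^{f}\,d\gamma\cdot\int e^{g}\,d\gamma\le\bigl(\int e^{h\wedge 3M}\,d\gamma\bigr)^2\le\bigl(\int e^{h}\,d\gamma\bigr)^2$. This avoids any uniform integrability argument; you should replace your CLT paragraph with this truncation step.
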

It should be noticed that equality cases are known in the Pr\'ekopa-Leindler inequality \cite{Dub} and correspond to choosing precisely  $h$ convex, and $f$ and $g$ proper translation and dilation of $h$. Of course, the extra assumptions of continuity of $f,g$ and lower boundedness of $h$ could be removed via standard approximation arguments, but we refrain from further discussion, since it does not seem possible to remove the convexity assumption on $h$ and to recover the full conclusion of Theorem \ref{thm:PL}.

\begin{proof}
Let $f, g, h \colon \mathbb{R} \to \mathbb{R}$ be continuous functions satisfying 
\[
\frac{1}{2}f(x) + \frac{1}{2}g(y) \leq h\Big(\frac{x+y}{2}\Big) \qquad \forall x,y \in \mathbb{R}\,,  
\]
with $h$ convex and bounded from below. First let us assume that $f$ and $g$ are bounded from above. For any $n$, define the following three functions on $\Omega_n$: for $x=(x_1,\dots,x_n) \in \Omega_n$,
set
\[
F_n(x):= f \left( \frac{\sum_{i=1}^n x_i - \frac{n}{2}}{\sqrt{n}/2} \right),
\quad
G_n(x):= g \left( \frac{\sum_{i=1}^n x_i - \frac{n}{2}}{\sqrt{n}/2} \right)
\;\; \mbox{and} \;\;
H_n(x):= h \left( \frac{\sum_{i=1}^n x_i - \frac{n}{2}}{\sqrt{n}/2} \right) .
\]
Then we observe that, for any $x,y  \in \Omega_n$, coordinate-wise
\[
x+y = x \wedge y + x \vee y.
\] 
Hence, the condition satisfied by $f, g$ and $h$ transfers to $F_n, G_n$ and $H_n$ as follows: for all $x, y \in \Omega_n$,
\[
F_n(x) + G_n(y) \leq 2H_n\left( \frac{x \wedge y + x \vee y}{2} \right) \leq H_n(x\wedge y) + H_n(x\vee y)\,,
\]
where the last inequality follows from the convexity of $h$. 
Let $M>0$ be a constant such that $f\leq M$, $g \leq M$ and $h\geq -M$. Then, it holds
\[
F_n(x) + G_n(y) \leq \min (H_n(x\wedge y) ; 3M) + \min(H_n(x\vee y); 3M).  
\]

In other words $F_n$, $G_n$ and $H_n \wedge 3M$ satisfy the condition of the Four Functions Theorem (with $h_3=h_4$) so that, denoting by $m_n$ the uniform probability measure on $\Omega_n$,
\[
\int_{\Omega_n}  e^{F_n}\,dm_n  \int_{\Omega_n} e^{G_n}\,dm_n \leq \left( \int_{\Omega_n} e^{H_n\wedge 3M}\,dm_n \right)^2\,,
\]
Applying the Central Limit Theorem, one gets 
\[
\left(\int_{\mathbb{R}} e^{f} \,d\gamma \right)^{1/2} \left(\int_{\mathbb{R}} e^{g}\,\,d\gamma \right)^{1/2} \leq \int_{\mathbb{R}} e^{h\wedge 3M}\,d\gamma \leq \int_{\mathbb{R}} e^{h}\,d\gamma
\]
where $\gamma$ denotes the Standard Gaussian probability measure on $\R.$
Replacing $f,g,h$ by $f_\lambda(x):=f(\lambda^{1/2} x)$, $g_\lambda(x):=g(\lambda^{1/2}  x)$ and $h_\lambda(x):=h(\lambda^{1/2}  x)$, where $\lambda >0$, one easily gets
\[
\left(\int_{\mathbb{R}} e^{f(x)} e^{-\frac{x^2}{2\lambda}}\,dx \right)^{1/2} \left(\int_{\mathbb{R}} e^{g(y)}e^{-\frac{y^2}{2\lambda}}\,dy \right)^{1/2} \leq \int_{\mathbb{R}} e^{h(z)} e^{-\frac{z^2}{2\lambda}}\,dz.
\]
Letting $\lambda \to +\infty$, the monotone convergence theorem gives the desired inequality. Finally, one can easily remove the upper boundedness assumption on $f,g$ by truncation and monotone convergence. 
\end{proof}

\section{Klartag-Lehec Pr\'ekopa-Leindler inequality on $\Z$}\label{Sec2}
\subsection{From Klartag-Lehec Inequality to the Four Functions Theorem}
To make clear the connection with the preceding section, let us first remark that Theorem \ref{thm:KL} implies the one dimensional version of the Four Functions Theorem (and thus the result in all dimensions by tensorization).

Indeed let $f,g,h,k$ be four non-negative functions on $\{0,1\}$ satisfying the hypothesis of   the Four Functions Theorem, namely 
for any $x,y\in \{0,1\}$\,,
\[f(x)\, g(y)\leq h(x\wedge y)\, k(x\vee y).\]
Setting for any $x\in \Z$
\[\tilde f(x):= f(x)\1_{\{0,1\}}(x),\]
and similarly $\tilde g, \tilde h, \tilde k$,
one may easily check that that for any $x,y\in \Z$
\[\tilde f(x)\, \tilde g(y)\leq \tilde h\left(\left\lfloor \frac{x+y}{2}\right \rfloor\right)\tilde k\left(\left\lceil \frac{x+y}{2} \right\rceil\right).\]
Therefore applying Theorem \ref{thm:KL}  we get the conclusion of the
Four Functions Theorem, 
\[ (f(0)+f(1))(g(0)+g(1))\leq (h(0) +h(1))(k(0)+k(1)).\]

\subsection{Transport proof of the Klartag-Lehec Inequality}
Our goal is now to establish the following entropic version of Klartag-Lehec Inequality which is actually stronger than Theorem \ref{thm:KL}. In what follows, we recall that the \emph{monotone coupling} $\pi$ between two probability measures $\nu_0$ and $\nu_1$ on $\R$ is defined by
\[
\pi = \mathrm{Law} (F_{\nu_0}^{-1} (U) , F_{\nu_1}^{-1} (U) ),
\] 
where $U$ is a random variable uniformly distributed on $(0,1)$ and  where for all $i \in \{0,1\}$, $F_{\nu_i}(x) = \nu_i((-\infty,x])$, $x \in \R,$ is the cumulative distribution of $\nu_i$ and $F_{\nu_i}^{-1}(t) = \inf\{ x \in \R : F_{\nu_i}(x) \geq t\}$, $t \in (0,1)$, is the generalized inverse of $F_{\nu_i}.$
\begin{thm}[displacement convexity of entropy]\label{thm:KLentropic}
Suppose that $\nu_0,\nu_1$ are two probability measures on $\Z$ with compact supports. Define (recall the definition of the push forward right before Lemma \ref{transport})
\[
\nu_- = {m_-}_\# \pi \qquad \text{and}\qquad  \nu_+ = {m_+}_\# \pi,
\]
where $\pi$ is the monotone coupling between $\nu_0$ and $\nu_1$ , and for all $x,y\in \Z$,
\[m_-(x,y):=\left\lfloor \frac{x+y}{2}\right \rfloor,\qquad m_+(x,y):=\left\lceil \frac{x+y}{2} \right\rceil.\]
 Then, denoting by $m$ the counting measure on $\Z$, it holds
\begin{equation}\label{eq:dispconv}
H(\nu_-| m) + H(\nu_+| m) \leq H(\nu_{0}| m) + H(\nu_{1}| m) .
\end{equation}
\end{thm}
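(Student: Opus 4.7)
The plan is to exploit the structure of the monotone coupling $\pi$. Enumerate its atoms as $(a_i,b_i)_{i=1}^N$ with both coordinates non-decreasing in $i$; distinctness of consecutive atoms then forces the sums $s_i:=a_i+b_i$ to be \emph{strictly} increasing. Since the midpoint map $T\colon(a,b)\mapsto(\lfloor(a+b)/2\rfloor,\lceil(a+b)/2\rceil)$ depends only on $a+b$, and $s\mapsto(\lfloor s/2\rfloor,\lceil s/2\rceil)$ is injective on $\mathbb{Z}$, the map $T$ is injective on $\mathrm{supp}\,\pi$, and $\tilde\pi:=T_\#\pi$ carries the same atomic masses $\pi_i$ at positions $(c_i,d_i):=T(a_i,b_i)$.

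Setting $\phi(x):=x\log x$, one has $H(\nu|m)=\sum_x\phi(\nu(x))$, so the target inequality compares sums of $\phi$ evaluated at the block masses of four interval partitions $P_0,P_1,P_-,P_+$ of $\{1,\ldots,N\}$, induced respectively by the equivalence relations $a_i=a_{i'}$, $b_i=b_{i'}$, $c_i=c_{i'}$, $d_i=d_{i'}$. A key combinatorial fact is that any coincidence $c_i=c_{i+1}$ or $d_i=d_{i+1}$ forces $s_{i+1}=s_i+1$, and hence one of $a_i=a_{i+1}$, $b_i=b_{i+1}$ must hold. In particular both $P_-$ and $P_+$ refine the join $P_0\vee P_1$, and since $\Phi(P):=\sum_{B\in P}\phi(\pi(B))$ decomposes additively over blocks of $P_0\vee P_1$, the theorem reduces to the local inequality $\Phi(P_0|_{B'})+\Phi(P_1|_{B'})\geq\Phi(P_-|_{B'})+\Phi(P_+|_{B'})$ on each block $B'$ of $P_0\vee P_1$.

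For each such $B'$ the proof proceeds by induction on $|B'|$, peeling off the leftmost atom, with the identity
\[
\phi(A+B)-\phi(A)-\phi(B)=\int_0^A\!\!\int_0^B\frac{dx\,dy}{x+y},\qquad A,B\geq 0,
\]
as the analytic input. Let $\delta_1:=s_2-s_1\geq 1$. If $\delta_1\geq 2$, then atom~$1$ is a singleton in both $P_-$ and $P_+$, and its removal increases the discrepancy by $\phi(X+Y)-\phi(X)-\phi(Y)\geq 0$, with $X=\pi_1$ and $Y$ the remaining mass of atom~$1$'s $P_0$- or $P_1$-block. If $\delta_1=1$, then atom~$1$ is moreover merged with atom~$2$ in exactly one of $P_-,P_+$ (determined by the parity of $s_1$), and a direct computation shows the removal increases the discrepancy by
\[
\phi(X+Y+Z)+\phi(Y)-\phi(X+Y)-\phi(Y+Z)=\int_Y^{X+Y}\!\!\int_0^Z\frac{dx\,dy}{x+y}\geq 0,
\]
with $X=\pi_1$, $Y=\pi_2$, and $Z$ the remainder of atom~$1$'s $P_0$- or $P_1$-block. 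The base case $|B'|=1$ is trivial, and summing over blocks of $P_0\vee P_1$ yields the theorem.

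The main technical step is the case analysis for the inductive move: one must verify in all four sub-cases $(t_1,\mathrm{parity}(s_1))\in\{0,1\}^2$ (where $t_1=0$ encodes $a_1=a_2$ and $t_1=1$ encodes $b_1=b_2$) that removing atom~$1$ produces exactly the three-atom gap displayed above, or its superadditive variant when $\delta_1\geq 2$. This requires carefully tracking, for each of the four partitions $P_0,P_1,P_-,P_+$, whether atom~$1$ appears as a singleton or as part of a larger block — a determination entirely driven by the triple $(t_1,\mathrm{parity}(s_1),\delta_1)$.
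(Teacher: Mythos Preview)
Your argument is correct and is genuinely different from the paper's route. The paper does not attack the entropy inequality directly: it first proves the stronger ``Jacobian--type'' bound (Theorem~\ref{thm:leq1})
\[
\sum_{(x,y)\in\mathrm{supp}\,\pi}\frac{\nu_-(m_-(x,y))\,\nu_+(m_+(x,y))}{\nu_0(x)\,\nu_1(y)}\,\pi(x,y)\leq 1,
\]
via a fairly intricate case analysis on the fibres $S(a)=\{(x,y)\in\mathrm{supp}\,\pi:m_-(x,y)=a\}$ and their $m_+$--overlaps, and then obtains \eqref{eq:dispconv} by a single application of Jensen to the concave logarithm. Your proof bypasses Theorem~\ref{thm:leq1} entirely: once you observe that the sums $s_i$ are strictly increasing along the ordered support of $\pi$ (so that $T$ is injective and the blocks of $P_-,P_+$ have size at most~$2$), the problem becomes a pure comparison of four interval partitions of $\{1,\ldots,N\}$, and the supermodularity identity for $\phi(x)=x\log x$ does the rest by a clean leftmost--peeling induction. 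What the paper gains is the independent inequality of Theorem~\ref{thm:leq1}, which is a pointwise--in--$\pi$ statement and is strictly stronger than \eqref{eq:dispconv}; what your approach gains is a shorter, more transparent path to the entropy statement itself, with the combinatorics isolated in the single observation that any $P_-$-- or $P_+$--coincidence forces a $P_0$-- or $P_1$--coincidence.

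Two small remarks. First, the phrase ``removal increases the discrepancy'' is stated with the wrong sign: what you actually compute (and what the induction needs) is $D_{\text{before}}-D_{\text{after}}=\phi(X+Y+Z)+\phi(Y)-\phi(X+Y)-\phi(Y+Z)\geq 0$, so removal \emph{decreases} the discrepancy and the inductive hypothesis $D_{\text{after}}\geq 0$ closes the loop. Second, it is worth recording explicitly (you use it implicitly) that the $P_-$--block of atom~$1$ in the $\delta_1=1$ case is exactly $\{1,2\}$ and not larger: indeed $c_2=c_3$ would require $s_2$ even, while $s_2=s_1+1$ has the opposite parity to $s_1$. This is what guarantees that the $\Phi(P_-)$--change upon removal is exactly $\phi(X+Y)-\phi(Y)$ and makes the three--term supermodularity formula apply cleanly.
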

Before turning to the proof of Theorem \ref{thm:KLentropic}, let us first recall how to recover Theorem \ref{thm:KL} from Theorem \ref{thm:KLentropic}.
\proof[Proof of Theorem \ref{thm:KL}] The proof uses (again) the dual expression of the log-Laplace transform of any bounded function $\varphi$:     
\begin{eqnarray}\label{esp}
\log \int e^{\varphi} dm=\sup_{\nu}\left\{\int \varphi\, d\nu -H(\nu|m)\right\},
\end{eqnarray}
where the supremum runs over all probability measures $\nu$ on $\Z$ with bounded support.
Let $f,g,h,k$ be four non-negative functions satisfying \eqref{eq:hyp}. Given $\varepsilon, \kappa>0$ and setting $f^{\varepsilon,\kappa}(x)=\max(\varepsilon, \min(f(x),\kappa))$, one may simply check that   equivalently for all $x,y\in \Z$,
\begin{align*}
\log f^{\varepsilon,\kappa}(x)+\log g^{\varepsilon,\kappa}(y) 
& \leq 
\log h^{\varepsilon,\kappa}\left(m_-(x,y)\right)+\log k^{\varepsilon,\kappa}\left(m_+(x,y)\right).
\end{align*}
Integrating this inequality with respect to the monotone coupling $\pi$ of   two probability measures on $\Z$ with bounded support $\nu_0$ and $\nu_1$ implies
\begin{align*}
\int \log f^{\varepsilon,\kappa} \,d\nu_0+\int \log g^{\varepsilon,\kappa} \,d\nu_1
& \leq 
\int \log h^{\varepsilon,\kappa}(m_-) \,d\pi+\int \log k^{\varepsilon,\kappa}(m_+) \,d\pi \\
& = 
\int \log h^{\varepsilon,\kappa} \,d\nu_-+\int \log k^{\varepsilon,\kappa} \,d\nu_+. 
\end{align*}
Therefore, applying Inequality \eqref{eq:dispconv} of Theorem \ref{thm:KLentropic} implies
\begin{align*}
\int \log f^{\varepsilon,\kappa} \,d\nu_0 & - H(\nu_0|m)+\int \log g^{\varepsilon,\kappa} \,d\nu_1-H(\nu_1|m) \\
&\leq 
\int \log h^{\varepsilon,\kappa} \,d\nu_-- H(\nu_-| m) + \int \log k^{\varepsilon,\kappa} \,d\nu_+- H(\nu_+| m) \\
&\leq 
\log \int h^{\varepsilon,\kappa} \,dm+ \log \int k^{\varepsilon,\kappa} \,dm,
\end{align*}
where the last inequality is a consequence of  Identity \eqref{esp}. 
Then optimizing over all  probability measures with bounded support $\nu_0$ and $\nu_1$, and using again \eqref{esp} one gets 
\[\log \int f^{\varepsilon,\kappa} \,dm+ \log \int g^{\varepsilon,\kappa} \,dm \leq \log \int h^{\varepsilon,\kappa} \,dm+ \log \int k^{\varepsilon,\kappa} \,dm.\]
The conclusion of Theorem \ref{thm:KL} follows by monotone convergence as $\varepsilon$ goes to 0 and $\kappa$ goes to infinity.
\endproof

Now we turn to the proof of Theorem \ref{thm:KLentropic} which in turn is a consequence of the following result of independent interest.
\begin{thm}\label{thm:leq1}
With the same notation as in Theorem \ref{thm:KLentropic}, it holds
\begin{equation}\label{eq:leq1}
\sum_{(x,y) \in \Z^2} \frac{\nu_-(m_-(x,y)) \nu_{+}(m_+(x,y))}{\nu_0(x)\nu_1(y)} \pi(x,y) \leq 1.
\end{equation}
\end{thm}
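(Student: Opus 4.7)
My plan is to exploit the rigid combinatorial structure of the monotone coupling $\pi$. First I would observe that since $X = F_{\nu_0}^{-1}(U)$ and $Y = F_{\nu_1}^{-1}(U)$ are both non-decreasing in the uniform variable $U$, so is $W := X+Y$; moreover $W$ is \emph{strictly} increasing on $\mathrm{supp}(\pi)$, because if two atoms $(X_i, Y_i)$ and $(X_j, Y_j)$ of $\pi$ had equal coordinate sums, monotonicity would force them to coincide. So I list the atoms of $\pi$ as $(X_1, Y_1), \ldots, (X_N, Y_N)$ with $W_i := X_i+Y_i$ strictly increasing, $p_i = \pi(X_i, Y_i)$, and introduce $q_w := \pi(\{(x,y) : x+y=w\})$, so that $q_{W_i}=p_i$ and $q_w=0$ otherwise.

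Since $\nu_-(u) = \pi(\{\lfloor(x+y)/2\rfloor = u\}) = q_{2u}+q_{2u+1}$ and $\nu_+(v) = q_{2v-1}+q_{2v}$, a short check on both parities of $w$ gives the parity-free identity
\[
\nu_-(\lfloor w/2 \rfloor)\,\nu_+(\lceil w/2 \rceil) \;=\; (q_{w-1}+q_w)(q_w+q_{w+1}).
\]
Plugging this in, the quantity to be bounded becomes
\[
S \;=\; \sum_{w} \frac{q_w\,(q_{w-1}+q_w)(q_w+q_{w+1})}{\nu_0(x_w)\,\nu_1(y_w)},
\]
where $(x_w, y_w)$ is the unique atom with $x_w+y_w=w$ and the sum runs over the (finite) support of $q$.

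The second key step is the local geometry at each $w$. The atom with $W = w \pm 1$ (if it exists) differs from the one at $W=w$ by exactly one unit step in $X$ or $Y$, and so lies in \emph{exactly one} of the two plateaus $\{w' : x_{w'} = x_w\}$ or $\{w' : y_{w'} = y_w\}$. Accordingly I classify each $w$ by the pair of transitions at $w-1 \to w$ and $w \to w+1$ into four local types $(XX), (XY), (YX), (YY)$. In the ``bending'' cases $(XY)$ and $(YX)$, one has $\nu_0(x_w)\,\nu_1(y_w) \geq (q_{w-1}+q_w)(q_w+q_{w+1})$, which forces the $w$-th summand $T_w := q_w(q_{w-1}+q_w)(q_w+q_{w+1})/(\nu_0(x_w)\nu_1(y_w))$ to be at most $q_w$. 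In the ``straight'' cases $(XX)$ and $(YY)$, the term may exceed $q_w$, but a direct computation with three consecutive atoms of masses $a,b,c$ sitting in a common $X$-plateau (so $\nu_1(y_w)$ collapses to the single mass at each atom) yields the identity
\[
a(a+b)\;+\;(a+b)(b+c)\;+\;(b+c)c \;=\; (a+b+c)^2-ac,
\]
which shows that the combined contribution of such a run saves an amount $ac$ compared with the sum of the masses.

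The main obstacle is making this compensation work in full generality. A maximal straight run may be embedded in a larger plateau that contains atoms coming from other, unrelated runs (separated by $W$-gaps of size $\geq 2$); its endpoints interact with the adjacent bending transitions; and in the $(XX)$ case the various $Y$-plateaus met by the run may themselves have non-trivial structure. To handle this I would partition the support of $q$ into maximal straight runs along shared plateaus and prove the per-run inequality $\sum_{w \in \mathrm{run}} T_w \leq \sum_{w \in \mathrm{run}} q_w$ by induction on the run length, generalizing the three-atom identity above. Summing this per-run inequality over the disjoint runs then yields $S \leq \sum_w q_w = 1$, as required.
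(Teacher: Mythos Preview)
Your reindexing by $w=x+y$ and the identity $\nu_-(\lfloor w/2\rfloor)\,\nu_+(\lceil w/2\rceil)=(q_{w-1}+q_w)(q_w+q_{w+1})$ are correct and elegant; they compress the paper's cardinality lemma on $S(a)$ and all the associated parity bookkeeping into one line. The gap is in your last paragraph. No naive partition of the atoms into ``maximal straight runs along shared plateaus'' gives a valid per-run inequality. Take the five-atom staircase
\[
(0,0),\ (0,1),\ (0,2),\ (1,2),\ (2,2)\qquad\text{with masses }a,b,c,d,e.
\]
Partitioning by $x$-plateaus gives blocks $\{0,1,2\},\{3\},\{4\}$, and $T_3=(c+d)(d+e)/(c+d+e)>d=q_3$ whenever $ce>0$. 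Partitioning by $y$-plateaus gives $\{0\},\{1\},\{2,3,4\}$, and symmetrically $T_1=(a+b)(b+c)/(a+b+c)>b=q_1$ whenever $ac>0$. The excess at the straight atom $3$ is in fact absorbed by slack at the bending atom $2$, but $2$ lives in a \emph{different} $x$-plateau; the compensation is genuinely cross-plateau, so an induction on the length of a single run has no hypothesis that carries it. (One can cook up ad hoc partitions that happen to work in this example, but you have not specified a general rule, and it is not clear one exists without essentially redoing the paper's argument.)

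The paper closes the gap by a regrouping rather than a partition: one writes $S=\sum_v \alpha_v\,\nu_+(v)$ with
\[
\alpha_v \;=\; \frac{q_{2v-1}\,(q_{2v-2}+q_{2v-1})}{\nu_0(x_{2v-1})\,\nu_1(y_{2v-1})}\;+\;\frac{q_{2v}\,(q_{2v}+q_{2v+1})}{\nu_0(x_{2v})\,\nu_1(y_{2v})},
\]
and it then suffices to check $\alpha_v\le 1$ for every $v$. This is a purely local inequality about the two atoms at $w=2v-1,\,2v$ and their immediate neighbours, and one verifies it by running through the four local transition patterns at the step $2v-1\to 2v$ --- exactly your $(XX),(XY),(YX),(YY)$ dichotomy, but now applied at a single joint rather than across a whole run. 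With your $q_w$-formalism already in place this case check is short, and it is the correct replacement for the per-run argument.
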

\proof[Proof of Theorem \ref{thm:KLentropic}]
The logarithm function  being concave one gets by Jensen's inequality, thanks to \eqref{eq:leq1}, 
\[
H:=\sum_{(x,y) \in \Z^2} \log\left(\frac{\nu_-(m_-(x,y)) \nu_{+}(m_+(x,y))}{\nu_0(x)\nu_1(y)}\right) \pi(x,y) \leq 0.
\]
Now observe that, by definition of $\pi$, $\nu_-$ and $\nu_+$,
\begin{align*}
H &= \sum_{z \in \Z} \log (\nu_-(z))\nu_-(z) + \sum_{z \in \Z} \log (\nu_+(z))\nu_+(z) - \sum_{z \in \Z} \log (\nu_0(z))\nu_0(z) - \sum_{z \in \Z} \log (\nu_1(z))\nu_1(z) \\
& =H(\nu_-| m) + H(\nu_+| m) - H(\nu_{0}| m) - H(\nu_{1}| m)\,,
\end{align*}
completing the proof.
\endproof

In the proof of Theorem \ref{thm:leq1} we will make repeated use of the following elementary lemma:
\begin{lem}\ \label{lem:elem}
\begin{enumerate}
\item  Let $(x_1,y_1), (x_2,y_2) \in \Z^2$ be such that $(x_1,y_1) \neq (x_2,y_2)$ with $x_1 \leq x_2$ and $y_1 \leq y_2$. 
Then $\lfloor \frac{x_1+y_1}{2} \rfloor = \lfloor \frac{x_2+y_2}{2} \rfloor$ if and only if  $y_2-y_1 + x_2-x_1= 1$ and $\frac{x_1+y_1}{2} \in \Z$. \\
In this case, $\lceil \frac{x_2+y_2}{2} \rceil = \lceil \frac{x_1+y_1}{2} \rceil+1$.
\item Let $(x_1,y_1), (x_2,y_2) \in \Z^2$ be such that $x_1\leq x_2$, $y_1\leq y_2$, $\lfloor \frac{x_1+y_1}{2} \rfloor =a$ and $\lfloor \frac{x_2+y_2}{2} \rfloor = a'$ with $a < a'$.
\begin{itemize}
\item If $a' \geq a+2$, $\lceil \frac{x_1+y_1}{2} \rceil \neq \lceil \frac{x_2+y_2}{2} \rceil$.
\item If $a'=a+1$, $\lceil \frac{x_1+y_1}{2} \rceil = \lceil \frac{x_2+y_2}{2} \rceil$ if and only if $y_2-y_1 + x_2-x_1 =1$ with $ \frac{x_1 + y_1}{2} \in \Z + \frac{1}{2}$.
\end{itemize}
\end{enumerate}
\end{lem}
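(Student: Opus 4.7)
The plan is to reduce everything to a careful parity analysis on the sums $s_1 := x_1+y_1$ and $s_2 := x_2+y_2$. My first step would be to introduce $d := s_2 - s_1 = (x_2-x_1)+(y_2-y_1)$ and observe that $d \in \Z_{\geq 0}$ under the ordering assumptions. In the setting of Part (1), since $x_1\leq x_2$, $y_1 \leq y_2$ and $(x_1,y_1)\neq(x_2,y_2)$, at least one of the two coordinate differences is strictly positive, so $d\geq 1$. I will use this lower bound at the key moment below.

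For Part (1), I would split on the parity of $s_1$. If $s_1 = 2k+1$ is odd, then $\lfloor s_1/2\rfloor = k$ while $s_2\geq 2k+2$ forces $\lfloor s_2/2\rfloor \geq k+1$, ruling out equality of the two floors. If instead $s_1=2k$ is even, then the equality $\lfloor s_2/2\rfloor = k$ forces $s_2\in\{2k,2k+1\}$, and the constraint $d\geq 1$ picks out $s_2 = 2k+1$. This exactly matches the stated characterization: $(x_2-x_1)+(y_2-y_1) = d = 1$ and $s_1/2 \in \Z$. The trailing ceiling identity $\lceil s_2/2 \rceil = \lceil s_1/2 \rceil + 1$ would then be read off directly, since in this case $\lceil s_1/2\rceil = k$ and $\lceil s_2/2\rceil = k+1$.

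For Part (2), I would instead start from the descriptions $s_1 \in \{2a, 2a+1\}$ and $s_2 \in \{2a', 2a'+1\}$ forced by the floor hypotheses, so $\lceil s_1/2\rceil \in \{a, a+1\}$ and $\lceil s_2/2\rceil \in \{a', a'+1\}$. For the first bullet, $a'\geq a+2$ immediately yields
\[
\lceil s_2/2\rceil \;\geq\; a' \;\geq\; a+2 \;>\; a+1 \;\geq\; \lceil s_1/2\rceil,
\]
so the ceilings differ. For the second bullet with $a'=a+1$, I would enumerate the four pairings of $s_1 \in \{2a, 2a+1\}$ with $s_2 \in \{2a+2, 2a+3\}$ and check by inspection that equality of the ceilings picks out exactly $s_1 = 2a+1$ and $s_2 = 2a+2$; translating back, this is precisely $\frac{x_1+y_1}{2} \in \Z + \frac{1}{2}$ together with $d = (x_2-x_1)+(y_2-y_1) = 1$, as claimed.

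There is no real conceptual obstacle here: the whole argument is elementary parity bookkeeping once everything is rewritten in terms of $s_1$, $s_2$ and $d$. The only points requiring attention are invoking $d\geq 1$ at the correct moment in Part (1) (this is where the hypothesis $(x_1,y_1)\neq(x_2,y_2)$ combines with the coordinatewise ordering), and systematically enumerating the four parity combinations in the $a'=a+1$ case of Part (2).
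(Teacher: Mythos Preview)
Your proposal is correct and follows essentially the same elementary parity analysis as the paper's proof, just organized slightly differently: in Part~(1) you split on the parity of $s_1$ first whereas the paper first bounds $d\leq 1$ and then reads off the parity, and in Part~(2) with $a'=a+1$ you enumerate the four $(s_1,s_2)$ pairs whereas the paper argues by eliminating $d=2$ directly. Your enumeration in the $a'=a+1$ case is arguably a touch more complete, since it transparently covers the possibility $d=3$ (i.e.\ $s_1=2a$, $s_2=2a+3$), which the paper's terse argument passes over in silence.
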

The following figures illustrate the next lemma.

\setlength{\unitlength}{0,8cm}
\begin{picture}(5,4)
\put(0.5,1){\line(1,0){4}}
\put(0.5,2){\line(1,0){4}}
\put(0.5,3){\line(1,0){4}}
\put(3,1){\line(-2,2){2}}
\put(3,1){\line(-1,2){1}}
\put(1,1){\circle*{0.1}}
\put(2,1){\circle*{0.1}}
\put(3,1){\circle*{0.1}}
\put(4,1){\circle*{0.1}}
\put(1,2){\circle*{0.1}}
\put(2,2){\circle*{0.2}}
\put(3,2){\circle*{0.1}}
\put(4,2){\circle*{0.1}}
\put(1,3){\circle*{0.1}}
\put(2,3){\circle*{0.1}}
\put(3,3){\circle*{0.1}}
\put(4,3){\circle*{0.1}}
\put(0.9,3.2){$x_1$}
\put(1.9,3.2){$x_2$}
\put(2.9,0.6){$y_1=y_2$}
\put(4.5,0){item (1)}
\end{picture}
\begin{picture}(6,4)
\put(0.5,1){\line(1,0){4}}
\put(0.5,2){\line(1,0){4}}
\put(0.5,3){\line(1,0){4}}
\put(3,1){\line(-2,2){2}}
\put(4,1){\line(-3,2){3}}
\put(1,1){\circle*{0.1}}
\put(2,1){\circle*{0.1}}
\put(3,1){\circle*{0.1}}
\put(4,1){\circle*{0.1}}
\put(1,2){\circle*{0.1}}
\put(2,2){\circle*{0.2}}
\put(5.8,2.55){\circle*{0.2}}
\put(6.0,2.45){= $\lfloor\frac{x_1+y_1}{2} \rfloor = \lfloor \frac{x_2+y_2}{2} \rfloor$ }
\put(3,2){\circle*{0.1}}
\put(5.8,1.45){$y_2-y_1 + x_2-x_1= 1, \;\; \frac{x_1+y_1}{2}\in \Z$}
\put(4,2){\circle*{0.1}}
\put(1,3){\circle*{0.1}}
\put(2,3){\circle*{0.1}}
\put(3,3){\circle*{0.1}}
\put(4,3){\circle*{0.1}}
\put(4.6,3){$x$}
\put(4.5,2){$\frac{x+y}2$}
\put(4.6,1){$y$}
\put(0.9,3.2){$x_1=x_2$}
\put(2.9,0.6){$y_1$}
\put(3.9,0.6){$y_2$}
\end{picture}

\begin{picture}(5,4)
\put(0.5,1){\line(1,0){4}}
\put(0.5,2){\line(1,0){4}}
\put(0.5,3){\line(1,0){4}}
\put(4,1){\line(-3,2){3}}
\put(4,1){\line(-2,2){2}}
\put(1,1){\circle*{0.1}}
\put(2,1){\circle*{0.1}}
\put(3,1){\circle*{0.1}}
\put(4,1){\circle*{0.1}}
\put(1,2){\circle*{0.1}}
\put(2,2){\circle*{0.2}}
\put(1.9,1.6){$a$}
\put(3,2){\circle*{0.2}}
\put(2.9,2.2){$a'$}
\put(4,2){\circle*{0.1}}
\put(1,3){\circle*{0.1}}
\put(2,3){\circle*{0.1}}
\put(3,3){\circle*{0.1}}
\put(4,3){\circle*{0.1}}
\put(0.9,3.2){$x_1$}
\put(1.9,3.2){$x_2$}
\put(3.1,0.6){$y_1=y_2$}
\put(3.7,0){item (2), $ a'=a+1$}
\end{picture}
\begin{picture}(6,4)
\put(0.5,1){\line(1,0){4}}
\put(0.5,2){\line(1,0){4}}
\put(0.5,3){\line(1,0){4}}
\put(3,1){\line(-1,2){1}}
\put(4,1){\line(-2,2){2}}
\put(1,1){\circle*{0.1}}
\put(2,1){\circle*{0.1}}
\put(3,1){\circle*{0.1}}
\put(4,1){\circle*{0.1}}
\put(1,2){\circle*{0.1}}
\put(2,2){\circle*{0.2}}
\put(1.9,1.6){$a$}
\put(5.8,2.45){$a=\lfloor \frac{x_1+y_1}{2} \rfloor ,\quad a'=\lfloor \frac{x_2+y_2}{2} \rfloor $}
\put(3,2){\circle*{0.2}}
\put(2.9,2.2){$a'$}
\put(5.8,1.45){$y_2-y_1 + x_2-x_1= 1,\; \frac{x_1 + y_1}{2} \in \Z + \frac{1}{2}$}
\put(4,2){\circle*{0.1}}
\put(1,3){\circle*{0.1}}
\put(2,3){\circle*{0.1}}
\put(3,3){\circle*{0.1}}
\put(4,3){\circle*{0.1}}
\put(4.6,3){$x$}
\put(4.5,2){$\frac{x+y}2$}
\put(4.6,1){$y$}
\put(1.9,3.2){$x_1=x_2$}
\put(2.9,0.6){$y_1$}
\put(3.9,0.6){$y_2$}
\end{picture}

\proof[Proof of Lemma \ref{lem:elem}]
(1) If $y_2-y_1 + x_2 - x_1 \geq 2$, then $\frac{x_2+y_2}{2} \geq \frac{x_1+y_1}{2} + 1$ and thus $\lfloor \frac{x_2+y_2}{2} \rfloor \geq  \lfloor \frac{x_1+y_1}{2} \rfloor + 1.$ Hence 
$y_2-y_1 + x_2 - x_1 =1.$ Without loss of generality one can assume that $x_1=x_2$ and $y_2 = y_1+1$. But in this case, $\frac{x_2+y_2}{2} = \frac{x_1+y_1}{2} + \frac{1}{2}$. The fact that $\lfloor \frac{x_1+y_1}{2} \rfloor = \lfloor \frac{x_2+y_2}{2} \rfloor$ then implies that $ \frac{x_1+y_1}{2} \in \Z.$ The converse is obvious. In this case $\lceil \frac{x_2+y_2}{2} \rceil = \lceil \frac{x_1+y_1}{2} + \frac{1}{2} \rceil = \frac{x_1+y_1}{2} +1 =  \lceil \frac{x_1+y_1}{2} \rceil +1$.

(2) If $a' \geq a+2$, then 
\[
\lceil \frac{x_2+y_2}{2} \rceil  \geq \lfloor \frac{x_2+y_2}{2} \rfloor = a' \geq a+2 = \lfloor \frac{x_1+y_1}{2} \rfloor +2 \geq \lceil \frac{x_1+y_1}{2} \rceil +1.
\]
Now let us assume that $a'=a+1$. If $y_2-y_1 + x_2-x_1= 2$, then $\frac{x_2+y_2}{2} = \frac{x_1+y_1}{2}+1$ and so $ \lceil \frac{x_2+y_2}{2} \rceil=\lceil \frac{x_1+y_1}{2} \rceil+1.$ Therefore $y_2-y_1 + x_2-x_1= 1$ and so $\frac{x_2+y_2}{2} = \frac{x_1+y_1}{2}+\frac{1}{2}$. The condition $\lfloor \frac{x_2+y_2}{2} \rfloor = \lfloor \frac{x_1+y_1}{2} \rfloor +1$ then implies that $ \frac{x_1 + y_1}{2} \in \Z + \frac{1}{2}$. Then it holds $\lceil \frac{x_2+y_2}{2} \rceil = \lceil \frac{x_1+y_1}{2} + \frac{1}{2} \rceil = \frac{x_1+y_1}{2} + \frac{1}{2} = \lceil \frac{x_1+y_1}{2} \rceil.$ The converse is obvious.
\endproof

Before proving Theorem \ref{thm:leq1} let us introduce some notation. We will denote 
\[
M_- = \{m_-(x,y) : (x,y) \in \mathrm{supp}(\pi)\}
\]
and for all $a \in \Z$, 
\[
S(a) = \{(x,y) \in \mathrm{supp}(\pi) : m_-(x,y) = a\}
\]
(with thus $S(a)=\emptyset$ when $a \notin M_-$).
\begin{lem}\label{lem:card}
For any $a\in \Z$, $\mathrm{Card}(S(a))\in \{0,1,2\}$. 
\end{lem}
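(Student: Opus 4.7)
The plan is to argue by contradiction: I will assume that some $S(a)$ contains three distinct points and then use the monotonicity of the support of $\pi$ together with Lemma~\ref{lem:elem}(1) to derive impossible parity constraints.

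First, I would recall the fundamental monotonicity property of the monotone coupling $\pi$: if $(x,y), (x',y') \in \mathrm{supp}(\pi)$ with $x < x'$, then $y \leq y'$, and symmetrically if $y < y'$ then $x \leq x'$ (this follows from the fact that $\pi$ is the law of $(F_{\nu_0}^{-1}(U), F_{\nu_1}^{-1}(U))$ and both generalized inverses are non-decreasing). Consequently, any finite collection of points in $\mathrm{supp}(\pi)$ can be totally ordered in such a way that both coordinates are simultaneously non-decreasing.

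Now suppose for contradiction that some $a \in \Z$ admits three distinct points $(x_1,y_1), (x_2,y_2), (x_3,y_3)$ in $S(a)$, labelled so that $x_1 \leq x_2 \leq x_3$ and $y_1 \leq y_2 \leq y_3$ (possible by the previous paragraph). Since these three points all satisfy $\lfloor (x_i+y_i)/2\rfloor = a$ and are pairwise distinct, I can apply Lemma~\ref{lem:elem}(1) to the pair $(x_1,y_1),(x_2,y_2)$, obtaining $x_2-x_1+y_2-y_1=1$ and $\tfrac{x_1+y_1}{2}\in\Z$; and similarly to the pair $(x_2,y_2),(x_3,y_3)$, obtaining $x_3-x_2+y_3-y_2=1$ and $\tfrac{x_2+y_2}{2}\in\Z$.

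The contradiction now comes from combining these two conclusions: from $x_2+y_2 = x_1+y_1+1$ we get $\tfrac{x_2+y_2}{2} = \tfrac{x_1+y_1}{2} + \tfrac12$, which, because $\tfrac{x_1+y_1}{2}$ is an integer, forces $\tfrac{x_2+y_2}{2} \in \Z+\tfrac12$, in contradiction with $\tfrac{x_2+y_2}{2}\in\Z$. Therefore no $S(a)$ can contain three points, so $\mathrm{Card}(S(a)) \in \{0,1,2\}$. The only delicate point — and really the only thing to check carefully — is the simultaneous ordering of the three points in both coordinates, which is precisely what the monotone coupling structure provides.
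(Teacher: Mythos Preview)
Your proof is correct and follows essentially the same approach as the paper: both arguments rely on the monotonicity of $\mathrm{supp}(\pi)$ to order the points of $S(a)$ and then invoke Lemma~\ref{lem:elem}(1) to force the increment $x_2-x_1+y_2-y_1=1$ together with the parity condition $\tfrac{x_1+y_1}{2}\in\Z$. The only organizational difference is that the paper fixes the minimal element $(x_0,y_0)$ and shows any other point must be $(x_0,y_0+1)$ or $(x_0+1,y_0)$ (which monotonicity prevents from coexisting), whereas you assume three points and reach a direct parity contradiction on the middle one --- the underlying mechanism is identical.
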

\proof[Proof of Lemma \ref{lem:card}]
Let $a \in M_-$. By compactness of the support of $\pi$, the set $S(a)$ is finite. 
Suppose that $\mathrm{Card} (S(a))>1$. Let $x_0$ be the minimal first coordinate of the elements of $S(a)$, and let $y_0$ be the minimal second coordinate of the elements of $S(a)$ having $x_0$ as first coordinate. If $(x_1,y_1)$ is another element of $S(a)$, then either $x_0=x_1$ and $y_0\leq y_1$, or $x_0 < x_1$ and in this case, by monotonicity of the support of $\pi$, one has $y_0 \leq y_1$. According to Item (1) of Lemma \ref{lem:elem}, one has $\frac{x_0+y_0}{2} \in \Z$ and ($x_0=x_1$ and $y_1=y_0+1$) or ($y_0=y_1$ and $x_1=x_0+1$). By monotonicity of the support of $\pi$, these two cases exclude each other and so $\mathrm{Card}(S(a))=2.$
\endproof
For $i \in \{1,2\}$, we will denote by $M_-^{i}$ the set of $a \in M_-$ such that $\mathrm{Card}(S(a))=i$.
If $a \in M_-^{1}$, the unique element of $S(a)$ will be denoted by $(x_0(a),y_0(a))$. If $a \in M_-^{2}$, we will denote by $(x_0(a),y_0(a))$ and $(x_1(a),y_1(a))$ the two elements of $S(a)$, with the convention that $x_0(a)\leq x_1(a)$ and $y_0(a)\leq y_1(a)$ and $\frac{x_0(a)+y_0(a)}{2} \in \Z$ as in Lemma \ref{lem:elem} and the proof above.

\proof[Proof of Theorem~\ref{thm:leq1}] Using the notation above, we need to show that the following quantity is less than or equal to $1$.
\begin{align*}
P&:=\sum_{(x,y) \in \Z^2} \!\! \frac{\nu_-(m_-(x,y)) \nu_{+}(m_+(x,y))}{\nu_0(x)\nu_1(y)} \pi(x,y) 
= \sum_{a \in M_-} \sum_{(x,y) \in S(a)} \!\!  \frac{\nu_-(a) \nu_{+}(m_+(x,y))}{\nu_0(x)\nu_1(y)}\pi(x,y) .
\end{align*}
The strategy to bound $P$ by 1 is to show that, in fact,
\begin{eqnarray}\label{Espagna}
P \leq \sum_{a \in M_-} \sum_{(x,y) \in S(a)} \pi(x,y) = 1.
\end{eqnarray}
For  that purpose we consider two cases.

\noindent \textbf{First case.} Let $a \in M_-$ be such that 
\begin{equation}\label{eq:non-inter}
m_+(S(a))\cap m_+(S(a-1)) =\emptyset  \qquad \text{and}\qquad m_+(S(a))\cap m_+(S(a+1)) =\emptyset .
\end{equation}
Then let us show that for all $(x,y) \in S(a)$, it holds 
\begin{equation}\label{eq:case1}
\frac{\nu_-(a) \nu_{+}(m_+(x,y))}{\nu_0(x)\nu_1(y)} \leq 1.
\end{equation}
We distinguish between two sub-cases, $a \in M_-^1$ and $a \in M_-^2$.
Suppose first that $a \in M_-^1$. Then $S(a) = \{(x_0,y_0)\}$ and therefore
\[
\nu_-(a) = \pi(\{(u,v) : m_-(u,v) = a\}) = \pi(x_0,y_0).
\]
Moreover, since $a$ satisfies \eqref{eq:non-inter}, Item 2 of  Lemma \ref{lem:elem} gives that
\[
\nu_+(m_+(x_0, y_0)) = \pi(\{(u,v) \in \Z^2 : m_+(u,v) = m_+(x_0,y_0)\}) = \pi((x_0,y_0)).
\]
Since $\pi(x_0,y_0) \leq \min(\nu_0(x_0), \nu_1(y_0))$, this gives \eqref{eq:case1}.
Now let us assume that $a \in M_-^2$. Then one can assume without loss of generality that $S(a) = \{(x_0,y_0),  (x_0,y_0+1)\}$ with $\frac{x_0+y_0}{2} \in \Z$ and thus $m_+(x_0,y_0+1) = m_+(x_0,y_0)+1.$
In this case,
\[
\nu_-(a) = \pi(\{(u,v) : m_-(u,v) = a\}) = \pi(x_0,y_0) + \pi(x_0, y_0+1) \leq \nu_0(x_0)
\]
and reasoning as above
\[
\nu_+(m_+(x_0,y_0)) = \pi(x_0,y_0) \leq \nu_1(y_0) \ \  \text{and} \ \  \nu_+(m_+(x_0,y_0+1)) = \pi(x_0,y_0+1) \leq \nu_1(y_0+1)\,,
\]
which establish \eqref{eq:case1}.

\bigskip

\noindent \textbf{Second case.} Let $a_0 \in M_-$ and $p\geq1$ such that $m_+(S(a_0+i)) \cap m_+(S(a_0+i+1)) \neq \emptyset$ for all $i \in \{0,\ldots,p-1\}$ and such that $m_+(S(a_0-1)) \cap m_+(S(a_0))= \emptyset$ and $m_+(S(a_0+p)) \cap m_+(S(a_0+p+1)) = \emptyset$ (\textit{i.e.}\ $p$ is maximal). Since $m_+(S(a_0+i)) \subset \{a_0+i ; a_0+i+1\}$, the only possibility is that $m_+(S(a_0+i)) = \{a_0+i ; a_0+i+1\}$ for all $i \in \{1,\ldots,p-1\}$ (this set being empty if $p=1$). 
Let us assume that $a_0 \in M_-^2$ and $a_0+p\in M_-^2$ (the other cases are dealt similarly).
Let us denote $(x_0^i, y_0^i) = (x_0(a_0+i), y_0 (a_0+i))$ and $(x_1^i, y_1^i) = (x_1(a_0+i), y_1 (a_0+i))$ (recall that by definition $x_1^i \geq x_0^i$ and $y_1^i \geq y_0^i$). According to Lemma \ref{lem:elem}, it holds $x_1^{i}- x_0^i + y_1^i - y_0^i =1$ and $y_0^{i+1}- y_1 ^i + x_0^{i+1} - x_1 ^i = 1$.

Let us introduce
\begin{align*}
P_{a_0} &:= \sum_{i=0}^p \sum_{(x,y) \in S(a_0+i)} \frac{\nu_-(a_0+i) \nu_{+}(m_+(x,y))}{\nu_0(x)\nu_1(y)} \pi(x,y)\\
& =  \sum_{i=0}^p \left[  \frac{\nu_-(a_0+i) \nu_{+}(a_0+i)}{\nu_0(x_0^i)\nu_1(y_0 ^i)} \pi(x_0 ^i,y_0^i)+  \frac{\nu_-(a_0+i) \nu_{+}(a_0+i+1)}{\nu_0(x_1 ^i)\nu_1(y_1^i)} \pi(x_1^i,y_1^i)\right]\\
\end{align*}
and let us show that 
\begin{equation}\label{eq:boundP}
P_{a_0} \leq \sum_{i=0}^p\sum_{(x,y) \in S(a_0+i)} \pi(x,y) .
\end{equation}
We will use the following facts:
\begin{itemize}
\item Fact 1 : For all $i \in \{0,\ldots,p\}$ it holds
\[
\nu_-(a_0+i) = \pi(x_0^i,y_0^i) + \pi(x_1^i,y_1^i)\,.
\]
\item Fact 2 : For all $i \in \{1,\ldots, p\}$
\[
\nu_+(a_o+i) = \pi(x_1^{i-1},y_1^{i-1})+\pi(x_0^i,y_0^i) 
\]
and $\nu_+(a_0) = \pi(x_0^0, y_0^0)$ and $\nu_+(a_0+p+1) = \pi(x_1^p,y_1^p)$.
\end{itemize}
Observe that
\[
P_{a_0} = \sum_{i=0}^{p+1} \alpha_i \nu_+(a_0+i),
\]
where, for $i\in \{1,\ldots,p\}$,
\[
\alpha_i =  \frac{\nu_-(a_0+i-1)}{\nu_0(x_1^{i-1}) \nu_1(y_1^{i-1})} \pi(x_1^{i-1},y_1^{i-1})+ \frac{\nu_-(a_0+i)}{\nu_0(x_0^i) \nu_1(y_0^i)} \pi(x_0^i,y_0^i).
\]
and 
\[
\alpha_0 = \frac{\nu_-(a_0)}{\nu_0(x_0^0) \nu_1(y_0^0)} \pi(x_0^0,y_0^0)\qquad \alpha_{p+1} = \frac{\nu_-(a_0+p)}{\nu_0(x_1^{p}) \nu_1(y_1^{p})} \pi(x_1^{p},y_1^{p}).
\]
According to Fact 2, in order to prove \eqref{eq:boundP}, it is enough to show that $\alpha_i \leq 1$ for all $i\in \{0,\ldots,p+1\}.$

For $i = p+1$, one can assume without loss of generality that $x_0^p = x_1^p$. Then, according to Fact 1, $\nu_-(a_0+p) \leq  \nu_0(x_1^p)$ and since $\pi(x_1^p,y_1^p) \leq \nu_1(y_1^p)$, it follows that $\alpha_{p+1} \leq 1$. The case $i=0$ is similar.

Now let us consider the case $i \in \{1,\ldots,p\}$. Observe that either $x_1^{i-1} = x_0^i$ either $y_1^{i-1} = y_0^i$. 
Without loss of generality, one can assume that $x_1^{i-1} = x_0^i$ (the  case $y_1^{i-1} = y_0^i$  follows by symmetry in $x$ and $y$), so that 
\[
\alpha_i =  \frac1{\nu_0(x_1^{i-1}) } \left(\frac{\nu_-(a_0+i-1)\, \pi(x_1^{i-1},y_1^{i-1})}{ \nu_1(y_1^{i-1})} + \frac{\nu_-(a_0+i)\, \pi(x_0^i,y_0^i)}{ \nu_1(y_0^i)} \right).
\]

Let us consider the following subcases:
\begin{itemize}
\item[(a)] If $x_0^{i-1} = x_1^{i-1} = x_0^i = x_1^i$, then 
$y_1^{i-1}, y_0^i, y_1^i$ are pairwise distinct. 
\begin{center}
\setlength{\unitlength}{0,8cm}
\begin{picture}(8,4)
\put(0.5,1){\line(1,0){6}}
\put(0.5,2){\line(1,0){6}}
\put(0.5,3){\line(1,0){6}}
\put(6,1){\line(-5,2){5}}
\put(3,1){\line(-2,2){2}}
\thicklines
\put(5,1){\line(-4,2){4}}
\put(4,1){\line(-3,2){3}}
\put(1,1){\circle*{0.1}}
\put(2,1){\circle*{0.1}}
\put(3,1){\circle*{0.1}}
\put(4,1){\circle*{0.1}}
\put(5,1){\circle*{0.1}}
\put(6,1){\circle*{0.1}}
\put(1,2){\circle*{0.1}}
\put(2,2){\circle*{0.2}}
\put(8,2.5){\circle*{0.2}}
\put(8.2,2.45){:$\,\,a_0+i-1$ }
\put(3,2){\circle{0.17}}
\put(8,1.5){\circle{0.17}}
\put(8.2,1.45){:$\,\,a_0+i$ }
\put(4,2){\circle*{0.1}}
\put(5,2){\circle*{0.1}}
\put(6,2){\circle*{0.1}}
\put(1,3){\circle*{0.1}}
\put(2,3){\circle*{0.1}}
\put(3,3){\circle*{0.1}}
\put(4,3){\circle*{0.1}}
\put(5,3){\circle*{0.1}}
\put(6,3){\circle*{0.1}}
\put(6.6,3){$x$}
\put(6.5,2){$\frac{x+y}2$}
\put(6.6,1){$y$}
\put(0.9,3.2){$x_0^{i-1}\!\!\!\!\!=x_1^{i-1}\!\!\!\!\!=x_0^i=x_1^i$}
\put(2.9,0.6){$y_0^{i-1}$}
\put(3.9,0.6){$y_1^{i-1}$}
\put(4.9,0.6){$y_0^{i}$}
\put(5.9,0.6){$y_1^{i}$}
\end{picture}
\end{center}
Since   $\pi(x_1^{i-1},y_1^{i-1}) \leq \nu_1(y_1^{i-1})$ and $ \pi(x_0^i,y_0^i) \leq \nu_1(y_0 ^i)$, by  using Fact 1, one gets 
\[
\alpha_i \leq \frac{\nu_-(a_0+i-1)+\nu_-(a_0+i)}{\nu_0(x_1^{i-1})}  = \frac{\pi(x_0^{i-1},y_0^{i-1}) + \pi(x_1^{i-1},y_1^{i-1}) +\pi(x_0^i,y_0^i) + \pi(x_1^i,y_1^i) }{\nu_0(x_1^{i-1})} \leq 1.
\]
The last inequality holds since $x_0^{i-1} = x_1^{i-1} = x_0^i = x_1^i$ and $y_1^{i-1}, y_0^i, y_1^i$ are pairwise distinct. 
\item[(b)] If $x_0^{i-1} \neq  x_1^{i-1} = x_0^i = x_1^i$, then necessarily $y_0^{i-1} = y_1^{i-1}$. 
\begin{center}
\begin{picture}(6,4)
\put(0.5,1){\line(1,0){5}}
\put(0.5,2){\line(1,0){5}}
\put(0.5,3){\line(1,0){5}}
\put(2,3){\line(0,-1){2}}
\put(3,3){\line(1,-2){1}}
\thicklines
\put(2,1){\line(1,2){1}}
\put(3,3){\line(0,-1){2}}
\put(1,1){\circle*{0.1}}
\put(2,1){\circle*{0.1}}
\put(3,1){\circle*{0.1}}
\put(4,1){\circle*{0.1}}
\put(5,1){\circle*{0.1}}
\put(1,2){\circle*{0.1}}
\put(2,2){\circle*{0.2}}
\put(3,2){\circle{0.17}}
\put(4,2){\circle*{0.1}}
\put(5,2){\circle*{0.1}}
\put(1,3){\circle*{0.1}}
\put(2,3){\circle*{0.1}}
\put(3,3){\circle*{0.1}}
\put(4,3){\circle*{0.1}}
\put(5,3){\circle*{0.1}}

\put(5.6,3){$x$}
\put(5.5,2){$\frac{x+y}2$}
\put(5.6,1){$y$}
\put(1.9,3.2){$x_0^{i-1}$}
\put(2.9,3.2){$x_1^{i-1}\!\!\!\!\!=x_0^i=x_1^i$}
\put(0.7,0.6){$y_0^{i-1}=y_1^{i-1}$}
\put(2.9,0.6){$y_0^{i}$}
\put(3.9,0.6){$y_1^{i}$}
\end{picture}
\end{center}
Using Fact 1, one gets $\nu_-(a_0+i-1) \leq \nu_1(y_1^{i-1})$. 
Since $\nu_-(a_0+i) = \pi(x_0^i,y_0^i) + \pi(x_1^i,y_1^i)$ and $\pi(x_0^i,y_0^i)\leq \nu_1(y_0^i)$ one gets
\[
\alpha_i \leq \frac{1}{\nu_0(x_1^{i-1})} \left[\pi(x_1^{i-1},y_1^{i-1}) + \pi(x_0^i,y_0^i)+ \pi(x_1^i, y_1^i)\right] \leq 1.
\]
\item[(c)] If $x_0^{i-1} =  x_1^{i-1} = x_0^i \neq x_1^i$, then necessarily $y_0^{i} = y_1^{i}$. 
\begin{center}
\begin{picture}(6,4)
\put(0.5,1){\line(1,0){5}}
\put(0.5,2){\line(1,0){5}}
\put(0.5,3){\line(1,0){5}}
\put(2,3){\line(0,-1){2}}
\put(3,3){\line(1,-2){1}}
\thicklines
\put(2,3){\line(1,-2){1}}
\put(2,3){\line(1,-1){2}}
\put(1,1){\circle*{0.1}}
\put(2,1){\circle*{0.1}}
\put(3,1){\circle*{0.1}}
\put(4,1){\circle*{0.1}}
\put(5,1){\circle*{0.1}}
\put(1,2){\circle*{0.1}}
\put(2,2){\circle*{0.2}}
\put(3,2){\circle{0.17}}
\put(4,2){\circle*{0.1}}
\put(5,2){\circle*{0.1}}
\put(1,3){\circle*{0.1}}
\put(2,3){\circle*{0.1}}
\put(3,3){\circle*{0.1}}
\put(4,3){\circle*{0.1}}
\put(5,3){\circle*{0.1}}
\put(5.6,3){$x$}
\put(5.5,2){$\frac{x+y}2$}
\put(5.6,1){$y$}
\put(0,3.2){$x_0^{i-1}\!\!\!\!\!=x_1^{i-1}\!\!\!\!\!=x_0^i$}
\put(2.9,3.2){$x_1^i$}
\put(1.9,0.6){$y_0^{i-1}$}
\put(2.9,0.6){$y_1^{i-1}$}
\put(3.9,0.6){$y_0^{i}=y_1^{i}$}
\end{picture}
\end{center}
Using Fact 1, one gets $\nu_-(a_0+i) \leq \nu_1(y_0^{i})$. Since $\nu_-(a_0+i-1) = \pi(x_0^{i-1},y_0^{i-1}) + \pi(x_1^{i-1},y_1^{i-1})$ and $\pi(x_1^{i-1},y_1^{i-1})\leq \nu_1(y_0^{i-1})$,  it follows that 
\[
\alpha_i \leq \frac{1}{\nu_0(x_1^{i-1})} \left[\pi(x_0^{i-1},y_0^{i-1}) + \pi(x_1^{i-1},y_1^{i-1})+ \pi(x_0^i, y_0^i)\right] \leq 1.
\]

\item[(d)] If $x_0^{i-1} \neq  x_1^{i-1}$, $x_1^{i-1} = x_0^i$ $x_0^i\neq x_1^i$, then necessarily $y_0^{i-1} =  y_1^{i-1}$ and $y_0^i= y_1^i$.
\begin{center}
\begin{picture}(6,4)
\put(0.5,1){\line(1,0){5}}
\put(0.5,2){\line(1,0){5}}
\put(0.5,3){\line(1,0){5}}
\put(4,1){\line(-1,2){1}}
\put(3,1){\line(-2,2){2}}
\thicklines
\put(4,1){\line(-2,2){2}}
\put(3,1){\line(-1,2){1}}
\put(1,1){\circle*{0.1}}
\put(2,1){\circle*{0.1}}
\put(3,1){\circle*{0.1}}
\put(4,1){\circle*{0.1}}
\put(5,1){\circle*{0.1}}
\put(1,2){\circle*{0.1}}
\put(2,2){\circle*{0.2}}
\put(3,2){\circle{0.17}}
\put(4,2){\circle*{0.1}}
\put(5,2){\circle*{0.1}}
\put(1,3){\circle*{0.1}}
\put(2,3){\circle*{0.1}}
\put(3,3){\circle*{0.1}}
\put(4,3){\circle*{0.1}}
\put(5,3){\circle*{0.1}}
\put(5.6,3){$x$}
\put(5.5,2){$\frac{x+y}2$}
\put(5.6,1){$y$}
\put(0.8,3.2){$x_0^{i-1}$}
\put(1.6,3.2){$x_1^{i-1}\!\!\!\!\!\!\!=x_0^i$}
\put(3,3.2){$x_1^i$}
\put(2,0.6){$y_0^{i-1}\!\!\!\!\!=y_1^{i-1}$}
\put(3.9,0.6){$y_0^{i}=y_1^{i}$}
\end{picture}
\end{center}
Reasoning as in the preceding cases, one gets $\nu_-(a_0+i-1) \leq \nu_1(y_1^{i-1})$, $\nu_-(a_0+i)\leq \nu_1(y_1^i)$ and so 
\[
\alpha_i \leq \frac{1}{\nu_0(x_1^{i-1})} \left[\pi(x_1^{i-1},y_1^{i-1}) + \pi(x_0^i,y_0^i)\right] \leq 1.
\]
\end{itemize}

Conclusion : by considering successively the elements $a \in M_-$ in increasing order, case 1 can be  repeated successively several times and we may pass from case 1 to case 2 or from case 2 to case 1. Therefore  
after a finite use of cases 1 and 2 described above,  \eqref{eq:case1} and \eqref{eq:boundP} imply \eqref{Espagna}. This concludes the proof of Theorem~\ref{thm:leq1}.
\endproof

\subsection{From the Klartag-Lehec Inequality to the Pr\'ekopa-Leindler Inequality}

First, let us explain how to recover the conclusion of Theorem \ref{thm:PL} for $t=1/2$ and continuous functions using Theorem \ref{thm:KL}.
More precisely we are going to show that if $F,G,H, K:\R \to \R^+$ are continuous functions such that
\[
F(x)G(y) \leq H\left(\frac{x+y}{2}\right)K\left(\frac{x+y}{2}\right),\qquad \forall x,y \in \R
\]
then
\begin{equation}\label{eq:PL}
\int F(x)\,dx \int G(x)\,dx \leq \int H(x)\,dx \int K(x)\,dx.
\end{equation}
Then taking in particular $H=K$ gives the conclusion of Theorem \ref{thm:PL} for $t=1/2$.
\proof[Proof of \eqref{eq:PL}]
Let $N\geq 1$ and for all positive integer $n$ consider the grid $x_i^n = -N + 2\frac{iN}{n} $, $i\in \{0,\ldots,n\}$.
Define $f,g,h,k : \Z \to \R^+$ as follows : 
$$
f(i):=\begin{cases}
F(x_i^n) & \mbox{if } i\in \{0,\ldots, n\} \\
 0 & \mbox{otherwise}
\end{cases} ,
\;
h(i):=\begin{cases}
\max( H(x_i^n) , H(x_i^n + \frac{N}{n})) & \mbox{if } i\in \{0,\ldots, n\} \\
 0 & \mbox{otherwise}
\end{cases} ,
$$
$$
g(i):=\begin{cases}
G(x_i^n) & \mbox{if } i\in \{0,\ldots, n\} \\
 0 & \mbox{otherwise}
\end{cases} ,
\;
k(i):=\begin{cases}
\max( K(x_i^n) , K(x_i^n - \frac{N}{n})) & \mbox{if } i\in \{0,\ldots, n\} \\
 0 & \mbox{otherwise}
\end{cases} .
$$
If $i,j \in \{0,\ldots, n\}$ then, there is some $\ep \in \{0,1\}$ such that
\[
\frac{x_i^n+x_j^n}{2} = -N + 2\frac{ \lfloor \frac{i+j}{2}\rfloor N}{n} + \ep \frac{N}{n} = x^n_{ \lfloor \frac{i+j}{2}\rfloor} +  \ep \frac{N}{n}
\]
and so $H(\frac{x_i^n+x_j^n}{2}) \leq h(\lfloor \frac{i+j}{2}\rfloor)$. Similarly, $K(\frac{x_i^n+x_j^n}{2}) \leq k(\lceil \frac{i+j}{2}\rceil)$.
Therefore, for all $i, j \in \{0,\ldots,n\}$,
\[
f(i)g(i) = F(x_i^n)G(x_i^n) \leq H\left(\frac{x_i^n+x_j^n}{2}\right)K\left(\frac{x_i^n+x_j^n}{2}\right) \leq h\left(\left\lfloor \frac{i+j}{2}\right\rfloor\right)k\left(\left\lceil \frac{i+j}{2}\right\rceil\right)\,. 
\]
The functions $f,g,h,k$ thus satisfy the assumption of Theorem \ref{thm:KL} and so 
\[
\left[\sum_{i=0}^n F(x_i^n)\right] \left[\sum_{i=0}^n G(x_i^n)\right]  \leq \left[\sum_{i=0}^n \max (H(x_i^n), H(x_i^n +\frac{N}{n})\right] \left[\sum_{i=0}^n \max(K(x_i^n) ; K(x_i^n - \frac{N}{n})\right] .
\]
By uniform continuity of $f,g,h,k$ on $[-2N,2N]$, multiplying both sides by $(2N/n)^2$ and letting $n \to +\infty$, it follows that
\[
\int_{-N}^N F(x)\,dx \int_{-N}^N G(x)\,dx \leq \int_{-N}^N H(x)\,dx \int_{-N}^N K(x)\,dx.
\]
Finally, letting $N \to +\infty$ gives \eqref{eq:PL}.
\endproof
\subsection{Displacement convexity of entropy : from discrete to continuous}
In the same vein as in the previous sub-section, one can deduce from Theorem \ref{thm:KLentropic}, the following well-known continuous version of the displacement convexity of the relative entropy with respect to Lebesgue measure.
\begin{thm}
Let $\nu_0,\nu_1$ be probability measures on $\R$ with compact supports and define $\nu_{1/2}$ as the law of $\frac{X_0+X_1}{2}$, where $(X_0,X_1)$ is distributed according to the monotone rearrangement coupling $\pi$ between $\nu_0$ and $\nu_1$. Then it holds
\[
2 H(\nu_{1/2} | \mathrm{Leb}) \leq H(\nu_{0} | \mathrm{Leb})+ H(\nu_{1} | \mathrm{Leb}).
\]
\end{thm}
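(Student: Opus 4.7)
The plan is to discretize by pushing the measures onto the lattice $\frac{1}{n}\Z$, apply the discrete displacement convexity of Theorem \ref{thm:KLentropic} to the resulting probability measures on $\Z$, and let $n\to\infty$. The two $-\log n$ terms that appear from the rescaling of counting measure to Lebesgue measure are designed to cancel exactly between the two sides of the discrete inequality.

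First I would reduce to the case where $\nu_0$ and $\nu_1$ are absolutely continuous with continuous densities $f_0,f_1$ that are bounded below by a positive constant on a common compact interval $[-L,L]$. This is justified by a standard truncation and mollification argument combined with the joint lower semi-continuity of relative entropy along weak convergence (the cases where $H(\nu_0|\mathrm{Leb})$ or $H(\nu_1|\mathrm{Leb})$ is infinite being trivial). Under this regularity assumption the monotone rearrangement $T=F_{\nu_1}^{-1}\circ F_{\nu_0}$ is a $C^1$-diffeomorphism on the support of $\nu_0$ with $f_0(x)=f_1(T(x))T'(x)$, and $\nu_{1/2}$, the law of $(X_0+T(X_0))/2$, admits a continuous density $f_{1/2}$ supported in a compact interval.

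For $n\geq 1$ and $i\in\{0,1\}$, I would set $\nu_i^n(k):=\nu_i\bigl([k/n,(k+1)/n)\bigr)$ for $k\in\Z$; these are probability measures on $\Z$ with common compact support. Let $\pi^n$ be the monotone coupling of $\nu_0^n$ and $\nu_1^n$, and let $\nu_\pm^n:={m_\pm}_\#\pi^n$. Theorem \ref{thm:KLentropic} then yields
\[
H(\nu_-^n|m)+H(\nu_+^n|m)\leq H(\nu_0^n|m)+H(\nu_1^n|m).
\]
A direct Riemann-sum computation, using continuity of $f_i$ and compactness of the supports (so that $n\,\nu_i^n(k)$ is uniformly bounded above and, on the support, uniformly bounded below by a positive constant), gives
\[
H(\nu_i^n|m)=-\log n + H(\nu_i|\mathrm{Leb})+o(1),\qquad i\in\{0,1\}.
\]

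The main obstacle, and the heart of the matter, is proving the analogous asymptotic $H(\nu_\pm^n|m)=-\log n+H(\nu_{1/2}|\mathrm{Leb})+o(1)$. For this I would push $\pi^n$ forward on $\R^2$ under $(i,j)\mapsto(i/n,j/n)$; the rescaled coupling converges weakly to the monotone coupling of $\nu_0$ and $\nu_1$, and since $\lfloor(i+j)/2\rfloor/n$ and $\lceil(i+j)/2\rceil/n$ both differ from $(i+j)/(2n)$ by $O(1/n)$, the rescaled versions of $\nu_\pm^n$ each converge weakly to $\nu_{1/2}$. Under the regularity of $f_0,f_1$ and the monotonicity of the coupling, this weak convergence upgrades to a uniform (on compacts) convergence of discrete densities $n\,\nu_\pm^n(\lfloor nx\rfloor)\to f_{1/2}(x)$, essentially because the continuous Monge identity $f_0(x)=f_{1/2}\!\bigl((x+T(x))/2\bigr)\cdot (1+T'(x))/2$ implements the mass-balance that drives the discrete pushforward. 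Combined with the uniform positivity bounds, dominated convergence then gives the desired entropy asymptotic. Substituting all four expansions into the discrete inequality cancels the $-\log n$ terms and produces the claim.
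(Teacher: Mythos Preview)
Your strategy is correct and leads to the result, but it is considerably more laborious than the paper's argument, and the crux of your proof---the claimed uniform convergence $n\,\nu_\pm^n(\lfloor nx\rfloor)\to f_{1/2}(x)$---is only sketched and would require real work to justify rigorously (one has to control how the discrete monotone coupling distributes mass over the at most two support points of each fibre $S(a)$, using the $C^1$ regularity of the transport map).

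The paper avoids all of this. Instead of proving four exact asymptotic expansions $H(\cdot\,|m)=-\log n+H(\cdot\,|\mathrm{Leb})+o(1)$, the paper normalises by the uniform probability $\mu^n$ on the grid restricted to $[-K,K)$ (which absorbs the $-\log n$), and then treats the two sides of the inequality \emph{asymmetrically}: on the right, a one-line Jensen argument gives the exact inequality $H(\nu_i^n|\mu^n)\leq H(\nu_i|\mu)$ for every $n$, with no regularity hypothesis on $\nu_i$; on the left, only weak convergence $\nu_\pm^n\Rightarrow\nu_{1/2}$ is needed, after which lower semicontinuity of relative entropy gives $2H(\nu_{1/2}|\mu)\leq\liminf\bigl(H(\nu_-^n|\mu^n)+H(\nu_+^n|\mu^n)\bigr)$. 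No mollification, no density lower bounds, no Riemann sums, no uniform convergence of discrete densities.

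What your approach buys, in principle, is an actual limit rather than an inequality in the discrete-to-continuous passage; but since the target statement is itself an inequality, this extra precision is not needed. The paper's asymmetric treatment (Jensen above, lower semicontinuity below) is the cleaner route here.
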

\proof
Without loss of generality, one can assume that $ H(\nu_{0} | \mathrm{Leb})+ H(\nu_{1} | \mathrm{Leb}) <+\infty$. Consider $(X_0,X_1)$ distributed according to $\pi$ and define, for $n\geq 1$, $\pi^n = \mathrm{Law} \left( \frac{\lfloor nX_0\rfloor}{n} , \frac{\lfloor nX_1\rfloor}{n} \right)$ and $\nu_0^n = \mathrm{Law} \left( \frac{\lfloor nX_0\rfloor}{n} \right)$ and $\nu_1^n = \mathrm{Law} \left( \frac{\lfloor nX_1\rfloor}{n} \right)$. The coupling $\pi^n$ is easily seen to be monotone. Since Theorem \ref{thm:KLentropic} immediately extends to probability measures on $\frac{1}{n}\Z$, one gets 
\begin{equation}\label{eq:Disp-1}
H(\nu_-^n | m^n) + H(\nu_+^n | m^n) \leq H(\nu_0^n | m^n) + H(\nu_1^n | m^n),
\end{equation}
where $m^n$ is the counting measure on $\frac{1}{n}\Z$ and 
\[
\nu_-^n = \mathrm{Law} \left(\frac{1}{n}\left\lfloor \frac{\lfloor nX_0 \rfloor + \lfloor nX_1 \rfloor }{2}\right\rfloor \right) \qquad\text{and}\qquad \nu_+^n = \mathrm{Law} \left(\frac{1}{n}\left\lceil \frac{\lfloor nX_0 \rfloor + \lfloor nX_1 \rfloor }{2}\right\rceil \right).
\]
Assuming that $\nu_0([-K,K[) = \nu_1([-K,K[) =1$, where $K\geq 1$ is an integer and denoting by $\mu^n$ the probability measure $\frac{1}{2nK}\mathbf{1}_{[-K,K[} m^n$, \eqref{eq:Disp-1} is equivalent to
\begin{equation}\label{eq:Disp-2}
H(\nu_-^n | \mu^n) + H(\nu_+^n | \mu^n) \leq H(\nu_0^n | \mu^n) + H(\nu_1^n | \mu^n).
\end{equation}
Let $\mu$ be the uniform (continuous) distribution on $[-K,K[.$ On the one hand, for $i \in \{0,1\}$
\begin{align*}
H(\nu_i^n | \mu^n) &= \sum_{k = -nK}^{nK-1} \nu_0^n\left(\frac{k}{n}\right)\log \left(\frac{\nu_i^n(\frac{k}{n})}{\mu^n(\frac{k}{n})}\right) \\
&= \sum_{k=-nK}^{nK-1} \P\left(X_i \in \left[\frac{k}{n}, \frac{k+1}{n}\right[ \right) \log \left(\frac{\P(X_i \in [\frac{k}{n}, \frac{k+1}{n}[ )}{\mu([\frac{k}{n}, \frac{k+1}{n}[)}\right) \\
& \leq  \sum_{k=-nK}^{nK-1}  \int_{\frac{k}{n}}^{\frac{k+1}{n}} \log\left(\frac{d\nu_i}{d\mu}\right) d\nu_i = H(\nu_i |\mu),
\end{align*}
where the inequality comes from Jensen's inequality applied to the convex function $x \mapsto x\log x.$
On the other hand, it is easy to see that $\nu_-^n$ and $\nu_+^n$ both weakly converge to $\nu_{1/2}$ (this comes from the almost sure convergence of the underlying random variables) and that $\mu^n$ weakly converges to $\mu$. Therefore, by lower semicontinuity of $(\alpha,\beta) \mapsto H(\alpha|\beta)$ for the weak convergence topology, one concludes that 
\[
2H(\nu_{1/2} | \mu) \leq \liminf_{n\to +\infty} \left(H(\nu_-^n | \mu^n) + H(\nu_+^n | \mu^n)\right) \leq H(\nu_0 | \mu) + H(\nu_1 | \mu),
\]
which proves the claim.
\endproof

\section{Inequalities with curvature terms for log-concave distributions.}
Finally, let us show how to derive from Theorem \ref{thm:KL} other versions adapted to log-concave probability measures. The following result is a straightforward restatement of Theorem \ref{thm:KL}.
\begin{cor}\label{cor:KL}
Let $\mu$ be a probability measure on $\Z$ such that $\mu(x)>0$ for all $x \in \Z$.\\
If $f,g,h,k : \Z \to \R^+$ are such that
\[
f(x)g(y) \leq h\left(\left\lfloor \frac{x+y}{2}\right \rfloor\right)k\left(\left\lceil \frac{x+y}{2} \right\rceil\right)e^{c_\mu(x,y)},\qquad \forall x,y\in \Z,
\]
where 
\[
c_\mu(x,y) = \log\left(\frac{\mu\left(\left\lfloor \frac{x+y}{2}\right \rfloor\right)\mu\left(\left\lceil \frac{x+y}{2} \right\rceil\right)}{\mu(x)\mu(y)}\right),\qquad \forall x,y \in \Z,
\]
then it holds
\[
\left(\sum_{x\in \Z} f(x)\mu(x)\right)\left(\sum_{y\in \Z} g(y)\mu(y)\right)\leq \left(\sum_{x\in \Z} h(x)\mu(x)\right)\left(\sum_{y\in \Z} k(y)\mu(y)\right).
\]
\end{cor}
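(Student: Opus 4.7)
The statement is essentially a reweighting of Theorem \ref{thm:KL}, and the plan is to reduce it to that theorem by absorbing $\mu$ into the four functions. Define
\[
\tilde f(x) := f(x)\mu(x), \quad \tilde g(x) := g(x)\mu(x), \quad \tilde h(x) := h(x)\mu(x), \quad \tilde k(x) := k(x)\mu(x),
\]
all of which are non-negative on $\Z$ (the positivity assumption $\mu(x)>0$ ensures no division by zero in the hypothesis). The goal is to verify that these four new functions satisfy the hypothesis \eqref{eq:hyp} of Theorem \ref{thm:KL}, after which the conclusion of the corollary is immediate since the stated inequality is literally $(\sum \tilde f)(\sum \tilde g) \leq (\sum \tilde h)(\sum \tilde k)$.

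The verification is a one-line manipulation. Starting from the assumed inequality, multiply both sides by $\mu(x)\mu(y)$:
\[
f(x)\mu(x)\, g(y)\mu(y) \leq h\!\left(\left\lfloor \tfrac{x+y}{2}\right\rfloor\right) k\!\left(\left\lceil \tfrac{x+y}{2}\right\rceil\right) e^{c_\mu(x,y)}\, \mu(x)\mu(y).
\]
By the very definition of $c_\mu$, one has $e^{c_\mu(x,y)}\mu(x)\mu(y) = \mu(\lfloor (x+y)/2\rfloor)\,\mu(\lceil (x+y)/2\rceil)$, so the right-hand side becomes precisely $\tilde h(\lfloor (x+y)/2\rfloor)\,\tilde k(\lceil (x+y)/2\rceil)$, giving exactly the hypothesis \eqref{eq:hyp} for the tilded functions.

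Applying Theorem \ref{thm:KL} to $\tilde f, \tilde g, \tilde h, \tilde k$ then yields
\[
\left(\sum_{x \in \Z} f(x)\mu(x)\right)\left(\sum_{y \in \Z} g(y)\mu(y)\right) \leq \left(\sum_{x \in \Z} h(x)\mu(x)\right)\left(\sum_{y \in \Z} k(y)\mu(y)\right),
\]
which is the claim. There is no real obstacle here: the curvature term $c_\mu$ has been engineered precisely so that it cancels against the reference measure when one passes from the statement ``with respect to $\mu$'' to the raw statement ``with respect to counting measure.'' The only mild cautions are the finiteness of the sums (which, if one wants, can be handled by the same truncation/monotone convergence procedure used in the proof of Theorem~\ref{thm:KL} from Theorem~\ref{thm:KLentropic}) and the positivity of $\mu$, which was included exactly to make $c_\mu$ well defined.
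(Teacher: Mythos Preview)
Your proof is correct and follows exactly the same approach as the paper: absorb $\mu$ into the four functions by setting $F=f\mu$, $G=g\mu$, $H=h\mu$, $K=k\mu$, observe that the definition of $c_\mu$ makes these satisfy the hypothesis of Theorem~\ref{thm:KL}, and apply that theorem.
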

\proof
Simply note that the functions $F(x)=f(x)\mu(x)$, $G(x)=g(x)\mu(x)$, $H(x)=h(x)\mu(x)$ and $K(x)=k(x)\mu(x)$, $x \in \Z$, satisfy the assumptions of Theorem \ref{thm:KL}.
\endproof
Note that the cost function $c_\mu$ always satisfies 
\[
c_\mu(x,x)=0\qquad \text{and}\qquad c_\mu(x,x+1)= c_\mu(x+1,x)=0,\qquad \forall x,y \in \Z.
\]

Let us introduce the optimal transport cost $\mathcal{T}_{c_\mu}$ associated to this cost function $c_\mu$:
\[
\mathcal{T}_{c_\mu} (\nu_0,\nu_1)  = \inf_{\pi \in \Pi(\nu_0,\nu_1)} \iint c_\mu(x,y)\,d\pi(x,y)
\]
with $\Pi(\nu_0,\nu_1)$ the set of probability measures on $\Z^2$ such that the first marginal of $\pi$ is $\nu_0$ and the second is $\nu_1.$

\begin{cor}
Let $\mu$ be a probability measure on $\Z$ such that $\mu(x)>0$ for all $x \in \Z$. Then $\mu$ satisfies the following transport-entropy inequality : for all probability measures $\nu_0,\nu_1$ on $\Z$,
\begin{equation}\label{eq:transport}
\mathcal{T}_{c_\mu} (\nu_0,\nu_1) \leq H(\nu_0|\mu) + H(\nu_1 |\mu).
\end{equation}
\end{cor}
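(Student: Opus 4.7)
The plan is to derive this inequality from Corollary \ref{cor:KL} via Kantorovich duality, in the same spirit in which Theorem \ref{thm:KL} was used to derive the Pr\'ekopa-Leindler inequality earlier in the paper. The key point is that the cost function $c_\mu$ appears naturally on the right-hand side of the functional inequality of Corollary \ref{cor:KL} when one takes $h=k=1$, so that the pointwise constraint $\varphi(x)+\psi(y)\leq c_\mu(x,y)$ governing the dual side of $\mathcal{T}_{c_\mu}$ matches exactly the hypothesis of the corollary.

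First, let $\varphi,\psi:\Z\to\R$ be any pair of functions (bounded, to start) such that
\[
\varphi(x)+\psi(y)\leq c_\mu(x,y), \qquad \forall x,y\in\Z.
\]
Setting $f=e^\varphi$, $g=e^\psi$ and $h=k\equiv 1$, the hypothesis of Corollary \ref{cor:KL} is satisfied, hence
\[
\left(\sum_{x\in\Z}e^{\varphi(x)}\mu(x)\right)\left(\sum_{y\in\Z}e^{\psi(y)}\mu(y)\right)\leq 1,
\]
which upon taking the logarithm reads $\log\int e^\varphi\,d\mu+\log\int e^\psi\,d\mu\leq 0$.

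Next I would invoke the dual formulation of the relative entropy (identity \eqref{esp} in Section \ref{Sec2}): for any probability measures $\nu_0,\nu_1$ on $\Z$ and any bounded $\varphi,\psi$,
\[
\int\varphi\,d\nu_0-H(\nu_0|\mu)\leq\log\int e^\varphi\,d\mu,\qquad \int\psi\,d\nu_1-H(\nu_1|\mu)\leq\log\int e^\psi\,d\mu.
\]
Summing these two inequalities and using the bound from the previous paragraph yields
\[
\int\varphi\,d\nu_0+\int\psi\,d\nu_1\leq H(\nu_0|\mu)+H(\nu_1|\mu),
\]
whenever $\varphi(x)+\psi(y)\leq c_\mu(x,y)$ for all $x,y\in\Z$.

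Finally, taking the supremum of the left-hand side over all admissible pairs $(\varphi,\psi)$ and applying the Kantorovich duality for the cost $c_\mu$,
\[
\mathcal{T}_{c_\mu}(\nu_0,\nu_1)=\sup\Bigl\{\int\varphi\,d\nu_0+\int\psi\,d\nu_1\;:\;\varphi(x)+\psi(y)\leq c_\mu(x,y)\Bigr\},
\]
gives \eqref{eq:transport}. The one technical point that needs attention is the validity of Kantorovich duality in this discrete infinite setting: one can either reduce to probability measures with finite support by truncation (noting that both $\mathcal{T}_{c_\mu}$ and $H(\cdot|\mu)$ behave well under such approximations), or directly invoke a general version of Kantorovich duality valid for lower semicontinuous costs on Polish spaces. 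This, together with the integrability of $\varphi,\psi$ against $\nu_0,\nu_1$ (which may be ensured by the standard truncation $\varphi\wedge N$, $\psi\wedge N$ before passing to the sup), is the only non-routine aspect of the argument.
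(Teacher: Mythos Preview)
Your argument is correct and follows exactly the same route as the paper: apply Corollary \ref{cor:KL} with $f=e^{\varphi}$, $g=e^{\psi}$, $h=k=1$ to obtain $\int e^{\varphi}\,d\mu\int e^{\psi}\,d\mu\leq 1$, and recognize this as the dual form of \eqref{eq:transport}. The paper stops at that sentence, whereas you spell out the Donsker--Varadhan and Kantorovich duality steps explicitly; this added detail is fine but not a different approach.
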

\proof
Let $u,v : \Z \to \R$ be such that
\[
u(x)+v(y) \leq c_\mu(x,y),\qquad \forall x,y \in \Z.
\]
Then according to Corollary \ref{cor:KL} applied to $f=e^u$, $g=e^v$ and $h=k=1$, it holds
\[
\left(\sum_{x\in \Z} e^{u(x)}\mu(x)\right)\left(\sum_{y\in \Z} e^{v(y)}\mu(y)\right)\leq 1.
\]
This is the dual form of \eqref{eq:transport}.
\endproof
The preceding corollary is the most interesting when the cost function $c_\mu$ is non-negative. A natural condition ensuring non-negativity of  $c_\mu$  is the log-concavity of $\mu$. We recall that a probability measure $\mu$ on $\Z$ is log-concave if it is such that
\[
\mu(x-1)\mu(x+1) \leq \mu(x)^2,\qquad \forall x\in \Z.
\]
If one defines, for any $t \in \R$, $V_\mu(t)$ as the linear interpolation between $\log \mu (\lfloor t \rfloor) $ and  $\log \mu (\lceil t \rceil)$, then it is easy to check that $\mu$ is log-concave if and only if the function $V_\mu$ is concave on $\R$.
\begin{lem}
Suppose that $\mu$ is log-concave on $\Z$ and such that $\mu(x)>0$ for all $x\in \Z$, then $c_\mu(x,y) \geq0$ for all $x,y \in \Z.$
\end{lem}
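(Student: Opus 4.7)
The plan is to establish the equivalent inequality
\[
\mu\bigl(\lfloor (x+y)/2 \rfloor\bigr)\,\mu\bigl(\lceil (x+y)/2 \rceil\bigr) \geq \mu(x)\,\mu(y)
\]
by invoking directly the concavity of the piecewise-linear interpolation $V_\mu$ mentioned in the paragraph preceding the lemma. Since $V_\mu$ coincides with $\log \mu$ on $\Z$, this inequality is equivalent to $V_\mu(a) + V_\mu(b) \geq V_\mu(x) + V_\mu(y)$, where $a := \lfloor (x+y)/2 \rfloor$ and $b := \lceil (x+y)/2 \rceil$.

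First, I would reduce to the case $x \leq y$ by symmetry of $c_\mu$ and dispense with the trivial case $x=y$. Then I would record two elementary observations about $a$ and $b$, namely $a + b = x + y$ and $x \leq a \leq b \leq y$.

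The second step is a standard ``concave sum in the middle'' computation. Setting $t := (a-x)/(y-x) \in [0, 1/2]$, the identity $a+b = x+y$ forces $a = (1-t)x + ty$ and $b = tx + (1-t)y$. Applying concavity of $V_\mu$ at the points $a$ and $b$, and summing, yields
\[
V_\mu(a) + V_\mu(b) \geq \bigl((1-t)+t\bigr)V_\mu(x) + \bigl(t+(1-t)\bigr)V_\mu(y) = V_\mu(x) + V_\mu(y),
\]
which is exactly $c_\mu(x,y) \geq 0$.

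I do not expect any serious obstacle here: the entire content of the argument is the equivalence ``log-concavity of $\mu$ $\iff$ concavity of $V_\mu$ on $\R$'', which is asserted just above the lemma. The only mildly non-obvious point is that one must use concavity of $V_\mu$ \emph{between} the two integer values (to handle the case when $(x+y)/2 \notin \Z$); this is what makes the interpolated extension $V_\mu$ the right object to work with, rather than just the inequality $\mu(n-1)\mu(n+1) \leq \mu(n)^2$ at integer points.
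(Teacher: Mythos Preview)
Your proof is correct and follows essentially the same approach as the paper: both arguments reduce the inequality to the concavity of the piecewise-linear interpolation $V_\mu$. The only cosmetic difference is that the paper splits into the cases $x+y$ even and $x+y$ odd and uses the slope formulation of concavity, whereas you handle both parities at once via the convex-combination formulation $a=(1-t)x+ty$, $b=tx+(1-t)y$; the content is identical.
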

\proof
Without loss of generality, one can assume that $x<y$. If $(x+y)=2k$, with $k \in \Z$, then we have to show that $\mu(k)^2 \geq \mu(x)\mu(y)$. With the notation $V_\mu$ introduced above, this inequality is equivalent to $\frac{V_\mu(k)-V_\mu(x)}{k-x} \geq \frac{V_\mu(y)-V_\mu(k)}{y-k}$ which follows immediately from the concavity of $V_\mu$. If $x+y = 2k+1$, then the inequality $\mu(k)\mu(k+1) \geq \mu(x)\mu(y)$ is equivalent to $\frac{V_\mu(k)-V_\mu(x)}{k-x} \geq \frac{V_\mu(y)-V_\mu(k+1)}{y-(k+1)}$ which again follows from the concavity of $V_\mu$.
\endproof

As an illustration, we end this section with the computation of the  cost $c_\mu$ for two specific examples of probability measures $\mu$ on $\mathbb{Z}$. Consider first the double-sided geometric-type measures $\mu(x)=ce^{-|x|}$, $x \in \mathbb{Z}$, where $c$ is the normalization constant. Then, an easy computation leads to $c_\mu(x,y)=2\min(|x|,|y|)\mathds{1}_{xy <0}$. While for $\mu(x)=ce^{-2x^2}$ (with $c$ again the normalization constant), we get
$c_\mu(x,y)=(x-y)^2\mathds{1}_{x+y \in 2\mathbb{Z}}+[(x-y)^2-1]\mathds{1}_{x+y \in 2\mathbb{Z}+1}$. There is essentially no gain in the first case, which corresponds to a flat situation, while the second example resembles the continuous setting with strictly convex potential for which $\Gamma_2$-calculus applies (see \cite{ane-book,BGL14,Vil2}).

\bibliographystyle{amsplain}
\bibliography{bib-PL}

\end{document}